\begin{document}

\title{On the Conjecture of Stability Preservation in Arbitrary-Order Adams-Bashforth-Type Integrators
}

\titlerunning{limiting stability conjecture}        

\author{Daopeng Yin         
\and
Liquan Mei 
}

\authorrunning{D. Yin \and L. Mei} 

\institute{D. Yin \at
School of Mathematics and Statistics. Xi'an Jiaotong University. No.28, West Xianning Road, Xi'an, Shaanxi, 710049, P.R. China \\
              \email{mathdpyin@163.com}           
           \and
           L. Mei \at
School of Mathematics and Statistics. Xi'an Jiaotong University. No.28, West Xianning Road, Xi'an, Shaanxi, 710049, P.R. China
\emph{Corresponding author}: \email{lqmei@mail.xjtu.edu.cn}
}

\date{Received: date / Accepted: 2026.03.09}

\maketitle

\begin{abstract}
This paper presents stability and accuracy analysis of a high-order explicit time stepping scheme introduced by \cite[Section 2.2]{Buvoli2019}, which exhibits superior stability compared to classical Adams-Bashforth. A conjecture that is supported by several numerical phenomena in \cite[Figure 2.5]{Buvoli2018}, the method appears to remain stable when the accuracy approaches infinity, although it is not yet proven. We have disproven this conjecture from the perspective of harmonic analysis in this work. Notwithstanding the aforementioned, this method displays considerably enhanced stability in comparison to conventional explicit schemes.  Furthermore, we present a criterion for ascertaining the maximum permissible accuracy for a given specific parabolic stability radius. Conversely, the original method will lose one order associated with the expected accuracy, which can be explored theoretically.  Consequently, a unified analysis strategy for the \( L^2 \)-stability will be presented for extensional PDEs under the CFL condition. Finally, a selection of representative numerical examples will be shown in order to substantiate the theoretical analysis.  
\keywords{Adams-Bashforth-type integrator \and region of stability \and  \( L^2 \)-stability \and CFL condition.}
 \subclass{65L20\and 65M15\and 65E99}
\end{abstract}
\section{Introduction}
When dealing with time evolution equations, explicit time-discretization schemes offer advantages such as high computational efficiency and clearer uniqueness of numerical solutions. Particularly, these algorithms have remarkable advantage for the non-linear evolutionary equations.  However, their algorithmic stability has consistently hindered the promotion of higher-order explicit schemes. 
\par 
In literature \cite{Dahlquist1963}, the concept of Dahlquist’s stability barrier was introduced, which says  that there exists no A-stable explicit \( k \)-step method. Historically, numerous scholars have devoted considerable effort to enhancing the stability of explicit schemes. M. Ghrist, et al  consider a variations of the Adams–Bashforth (AB) via staggered grid techniques which improve the range of the stability area on the imaginary axis shown by \cite[Table 4.2, Table 4.3]{Ghrist2000}.   Although there are some methods called explicit ways to achieve A-stability \cite{Norsett1969,Chollom2012}, there are some discrepancies in the essence of the classic A-stability description.   Recently, a review article \cite{Givoli2023} illustrates  two types of barrier breakers.  The existing articles essentially only improve the stability of explicit schemes and cannot strictly achieve A-stability. Therefore, it is also necessary to examine the conditionally stable behavior of explicit schemes.
\par
A recently published article \cite{Buvoli2019} has proposed an intriguing algorithm that employs the Taylor expansion, the Cauchy integral formula on the complex time plane \cite{Henrici1993}, and its discretization using the trapezoidal rule \cite{Austin2014,Trefethen2014}. This algorithm is outlined in detail in reference \cite[Section 2]{Buvoli2019} of the cited article. 
In section \ref{sec:Methodological review}, we will elaborate on the connection between this scheme and the AB method. For this reason, this method will be referred to in our description as the Adams-Bashforth-type integrator (ABTI). 
Prior to this, there have been successful cases where traditional time discretization methods were extended to the complex plane for the construction and solution of differential equations \cite{Corliss1980a,Fornberg2011,Orendt2009,Hansen2009}. Numerical differentiation methods based on interpolation at the unit roots of the complex plane have effectively overcome the ill-conditioning of classical difference schemes \cite{Lyness1967,Fornberg1981}.
\par 
Considering the initial value problem \( \frac{\mathrm{d}}{\mathrm{d}t} u(t) = f(t,u(t)) \) with given initial data \( u(0) = u^0 \), the ABTI can be expressed in matrix form 
\begin{equation}
 \mathbf{u}^{[n+1]}=\mathbf{A}\mathbf{u}^{[n]} + r \mathbf{B}(\alpha)\mathbf{f}^{[n]},  \text{ when } n \ge 1, 
 \label{eq:main_integrator}
\end{equation} 
where \( \mathbf{u}^{[n+1]}, \mathbf{u}^{[n]} \) represent the solution vector, \( \mathbf{A}, \mathbf{B}(\alpha) \) are discrete-time matrix and \( \mathbf{f}^{[n]} = f(\mathbf{t}^{[n]}, \mathbf{u}^{[n]}) \).  Here, the parameter \( \alpha = (t^{n+1} - t^{n})/r \) under the uniform temporal discrete grid. The initial solution vector \( \mathbf{u}^{[0]} \) obtained using so-called iterator \( \mathbf{u}^{[0]} = u^0 \mathbbm{1} + r\mathbf{B}(0)\mathbf{f}^{[0]} \). Finally, the classical numerical solution, represented by the variable \( u^{n+1} \), is reconstructed at discrete time level \( n+1 \) by the component of the solution vector, denoted by \( \mathbf{u}^{[n+1]} \). In the context of a ABTI situation, \( u^{n+1} \) is equal to the arithmetic mean value of all entries in \( \mathbf{u}^{[n+1]} \).
Indeed, if we were to take the arithmetic mean of both sides of equation \eqref{eq:main_integrator}, we would be able to discern the relationship between ABTI and classical ABs. This is due to the fact that the AB method is designed to perform polynomial approximation with previous multiple nodes for the function \( f(t, u(t)) \) in the integral form of the ordinary differential equation (ODE)
\begin{equation}
u(t_{n+1}) = u(t_n) + \int_{t_n}^{t_{n+1}}f(t,u(t)) \mathrm{d}t. \nonumber
\end{equation}
The distinction between the ABTI and ABs can be attributed to the manner of approximation employed for the function \( f(t, u(t)) \). Notwithstanding, there are notable enhancements in the stability of the ABTI for high-order schemes when compared to ABs and other explicit schemes.
\par
In order to gain insight into the stability boundaries of ABTI, it is first necessary to review some of the fundamental concepts associated with the linear stability analysis \cite[Chapter 5]{Hairer1996}. The Dalhquist problem, given by the differential equation \( \frac{\mathrm{d}}{\mathrm{d}t} u(t) = \lambda u(t),  \lambda \in \{z\in \mathbb{C}; \mathtt{Re}(z) \le 0 \} \) with initial data \( u(0) = 1 \), can be discretized using the first-order Adams-Bashforth  (AB1) method. The numerical solution can then be rewritten as  \( u^{n+1} = R(z)u^n\)  where \( R(z) = 1+z \) be called stability function with \( z:= \lambda \tau \),  and \( \mathcal{S} := \{z \in \mathbb{C}; \lvert R(z) \rvert < 1\} \) the region of absolute stability correspondingly. Similarly, these concepts can be extended to encompass ABTI, as defined below.
\begin{definition}
\label{def:absolute_stability_region}
Under the consideration of ABTI for Dalhquist problem, the matrix-value stability function defined by  \( \mathbf{R}(z) = \mathbf{A} + z \mathbf{B}(\alpha)/\alpha \) with \( z:= \tau \lambda \) such that the numerical solution vector \( \mathbf{u}^{[n+1]} = \mathbf{R}(z)\mathbf{u}^{[n]} \) and the absolute stability region described by spectral radius \( \mathcal{S}_{ q } := \{z \in \mathbb{C}; \rho(\mathbf{R}(z)) < 1 \}  \) where \( \mathbf{R}(z) \) is a \( q \)-dimensional square matrix where \( q >1 \).
\end{definition}
Regarding the ABTI algorithm \cite[Section 2.2]{Buvoli2019}, Buvoli gave a guess to quantify the degree of stability improvement compared to existed explicit schemes. For the AB method, we have a general understanding that the region of absolute stability will shrink to origin rapidly as the order of accuracy increases. This means that stronger time-step constraints are required when calculating stiffness problems. At the same time, when the scheme is applied to a parabolic problem, the number of Courant-Friedrichs-Lewy (CFL) conditions is smaller, i.e., strong step size limit is also required. In contrast, using the definition \ref{def:absolute_stability_region} to characterize stability, Buvoli conjecture that the region of absolute stability of the ABTI will tend to a limiting region of absolute stability (centered on the origin and \( \tfrac{1}{e} \) as the left semicircle of radius) as the approximation order increases, viz,  
\begin{equation}
\label{eq:uniform_stability_domain}
\lim\limits_{q\to \infty} \mathcal{S}_{q} \to \mathcal{S}_{\infty} := \left\{z\in \mathbb{C}, \mathtt{Re}(z) < 0 \text{ and } |z|< \frac{1}{e} \right\}, \nonumber
\end{equation} 
where \( e \) is the Euler's number. 
This signifies that the issue of stability anxiety can be effectively addressed through the utilisation of a higher-order explicit scheme. In other words, when a uniform step size constraint is applied, ABTI is still capable of ensuring stability for arbitrary higher-order discrete calculations. Furthermore, given the nature of component parallel computing in ABTI, concerns regarding the computational efficiency of this method are unwarranted, provided that the requisite computing resources are available. To the best of my knowledge, this result has only been numerically illustrated \cite[Figure 2.5]{Buvoli2019}, and has not yet been mathematically rigorously proven. This is one of the tasks that should be completed in this work.
\par
In this paper, we disprove the conjecture put forth by T. Buvoli through a series of methods and provide a criterion for determining the relationship between the parabolic stability radius and the maximum approximation accuracy. On the other hand, the accuracy of original ABTI is one order of magnitude lower than the ideal order. We identify the underlying cause of this discrepancy and restore the accuracy of ABTI through a straightforward correction. By applying the tensor matrix eigenvalue formula, we extend the results of linear stability analysis to linear parabolic equations, identifying the optimal parabolic CFL condition for \( L^2 \)-stability. Finally, a series of illustrative numerical examples are presented.
\par
Throughout this paper, we denote by \( C \) a generic positive constant that is independent of the temporal and spatial size of discretization \( \tau, h \). Let \( \mathbf{B} \) as the abbreviations of \( \mathbf{B}(\alpha)/\alpha \) without causing ambiguity. \( \mathtt{BV}(\Omega) \) means the bounded variation space equided with norm \( \operatorname{TV}( f ):= \sup_{P}\Sigma_{i=1}^{n} \lvert f(x_{i}) - f(x_{i-1}) \rvert \) and \( L^2( \Omega) \) the classical  square integrable space. $\gamma_{n}(z):= z^n/n! $ is denoted as Gelfand-Shilov function, for which the exponential function \( e^z \) can be represented by $ \Sigma_{n = 0}^{\infty}\gamma_n(z)  $ in sense of series expansion. $\mathtt{Re}(z)$ and $\mathtt{Im}(z)$ stand for the real and imaginary part of complex number $z$ respectively. The   $q$ distinct $q$-th roots of unity $\omega_{j}:=e^{\frac{i 2\pi j}{q}}$ for all $j=1,2, \ldots, q$ where the imaginary unit $i = \sqrt{-1}$. The discrete complex time nodes \( t_j^{[n]} := t_{n} + r \omega_{j} \) for all \( j = 1, \ldots, q \) where \( t_{n} \) is traditional time node and \( r  \) is arbitrary non-negative real number stand for radius. Correspondingly, \( u_{j}^{[n]} \) or \( f_{j}^{[n]} \) represent the approximation value or sampling value at the complex time discrete node \( t_j^{[n]} \). 
\par
The rest of this paper is organized as follows. In Section \ref{sec:Methodological review},  we review the construction of the ABTI algorithm and identify a certain structure present in its matrix expression. At the same time, we compare it with the classical first-order Adams-Bashforth method to facilitate our understanding of such algorithms. In Section \ref{sec:The Main Results}, we presents the main results of the paper, including the mathematical analysis of the Buvoli's conjecture and subtle modifications to the original scheme to achieve the optimal convergence order. For the sake of readability, some of the more tedious derivations are placed in Appendix \ref{sec:Derivation of Characteristic Polynomial}. In Section \ref{sec:Applications}, 
we will present the application of the ABTI method to parabolic equations, deriving the \( L^2 \)-stability and \( L^2 \)-error under conditions compatible with ODE problems. In Section \ref{sec:Numerical Verifications} 
we will numerically validate our analytical results.  In Appendix \ref{sec:Derivation of Characteristic Polynomial}, we derives the characteristic polynomial of the amplification matrix, and it will be placed at the end of the paper as preparatory work for Section \ref{sec:The Main Results}.
\section{Methodological review}
\label{sec:Methodological review}
In this section, we restate the formulation of ABTI \cite[Section 2]{Buvoli2019}.
 As a starting, we consider a typical ODE problem
\begin{equation}
\label{eq:ODE}
\tfrac{\mathrm{d}}{\mathrm{d}t} u = f(t,u), \text{ with } u(0) = u^0, 
\end{equation}
and its numerical treatment by ABTI. 
\par 
We outline the main characteristics of the methods by comparing them with the traditional AB1 method, specifically the explicit Euler method. The comparison ranges from the AB1 scheme to higher-order time-stepping methods in the complex plane. The AB1 method requires the initial value at \( t_0 \) and obtains the numerical solution for all \( n \ge 0 \) through repeated iterations from \( t_n \) to \( t_{n+1} \). This process approximates the values at subsequent time steps. Since the time-stepping scheme constructed from \( t_n \) to \( t_{n+1} \) is a linear approximation, the optimal approximation, in the usual sense, can only achieve first-order accuracy. To construct higher-order AB methods, values from previous time nodes are typically needed. It is well-known that derivatives describe the local behavior at a specific moment. However, the involvement of non-local nodes in higher-order numerical schemes and the values they approximate inherently lead to a contradiction, similar to the overfitting phenomenon of higher-order polynomial approximation in data fitting. This contradiction can only be resolved when the time step approaches zero. Consequently, the stability of the scheme requires more stringent constraints on the time step size.  
%
\par 
We treat the time as a complex variable and consider the original time nodes \( t_n \) as vector,  centered at \( t_n \) around \( q \) unit roots, and denote them as \( \mathbf{t}^{[n]} \). The implementation of this algorithm involves three main steps:
\begin{enumerate}
\item
\textbf{Preprocessing} (Initialization Vector) – In the traditional AB method, the initial value \( u^{0} \) is strictly equal to the initial condition of the equation. In the ABTI method, the initialization value is essentially a vector, but we only know the value at the real time node \( t_0 \). Therefore, we need to somehow establish a connection between \( u^{0} \) and \( \mathbf{u}^{[0]} \), which can be done using the information provided by the differential equation. This is exactly what we will introduce next: the iterators.
\item
\textbf{Time-stepping} (Solution Vector Updating)– From the constructed scheme, we compute the numerical solution vector \( \mathbf{u}^{[n+1]} \) from the given \( \mathbf{u}^{[n]} \) and the right-hand-side function \( f \). This is exactly what we will introduce next: the propagator.
\item
\textbf{Postprocessing} (Numerical Solution Reconstruction) – From the previous two steps \textbf{preprocessing} and \textbf{time-stepping}, we obtain the solution vectors \( \mathbf{u}^{[n]} \) for all \( n \ge 0 \). However, the primary goal is to obtain the values at the real time node \( t_{n} \). The information at \(  \mathbf{t}^{[n]} \) is then used to reconstruct the approximate value at the real time node \( t_n \). According to Schwarz's reflection principle, we know that the function values of an analytic function at conjugate nodes are complex conjugates of each other. Thus, the numerical solution at the real time node \( t_n \) can be expressed as the arithmetic mean of all components of the solution vector \( \mathbf{u}^{[n]} \), namely \( u^{n}=\frac{1}{q}\sum_{j=1}^{q}u_{j}^{[n]}\).
\end{enumerate}
\par 
Overall, from the perspective of the \textbf{time stepping} process, ABTI has a high similarity to the AB1 method and, as \( r \to 0 \), ABTI can degenerate into the AB1. The locality of the ABTI method aligns with the locality of the derivative, and this consistency ensures that the ABTI method maintains good numerical stability. In other words, we can also consider preprocessing and postprocessing as processes of dimension extension and dimension reduction, respectively.
\par
 Next, we will review the details of the scheme construction. Since the Cauchy–Kowalevski theorem guarantees analyticity of solution of original ODE \eqref{eq:ODE} inside the proper circular domain, the \( u(t_j^{[n+1]}) = u(t_{n+1}+r \omega_j) = u(t_{n}+\tau+r \omega_{j}) \) can be approximated by the Laurent expansion  at \( t_{n+1} \) or \( t_{n} \) that 
\begin{subequations}
\begin{align}
 u(t_j^{[n+1]})  \approx u^{n+1} + \sum\limits_{\nu=1}^{q}  \frac{u^{(\nu)}(t_{n+1})}{\nu!} (r \omega_{j})^{\nu}, \label{eq:pre_iterator}\\
 u(t_j^{[n+1]})  \approx u^{n} + \sum\limits_{\nu=1}^{q}  \frac{u^{(\nu)}(t_{n})}{\nu!} (r \omega_{j}+\tau)^{\nu}. \label{eq:pre_propagator}
\end{align}
Due to  the similarity of treatment about expansion coefficients in \eqref{eq:pre_iterator} and \eqref{eq:pre_propagator}, the statement focus on the second one. The approximation \eqref{eq:pre_propagator} whose coefficients of the Laurent series can be represented by the Cauchy integral formula \cite{Bornemann2010}
\begin{align*}
\displaystyle
\frac{u^{(\nu)}(t_n)}{\nu !} = 
\begin{cases}
\dfrac{1}{2 \pi i}\oint_{\Gamma}\dfrac{u(z)}{(z-t_{n})}\mathrm{d}z, & \nu = 0, \\
\dfrac{1}{2 \pi \nu i} \oint_{\Gamma}\dfrac{u^{\prime}(z)}{(z-t_n)^{\nu}}\mathrm{d}z = \dfrac{1}{2 \pi \nu i} \oint_{\Gamma}\dfrac{f(z, u(z))}{(z-t_n)^{\nu}}\mathrm{d}z , & \nu \ge 1.
\end{cases}
\end{align*}
Equivalently, choosing a relatively simple \( \Gamma \) to be a circular of contour radius \( r \) centered at \( t_n \) and utilizing the change of variables  \( z = t_n + re^{i \theta} \), then the above expression is equivalent to 
\begin{align*}
\frac{u^{(\nu)}(t_n)}{\nu !} = 
\begin{cases}
\dfrac{1}{2\pi} \int_{0}^{2\pi}u(r e^{i \theta} + t_n) \mathrm{d}\theta, & \nu = 0, \\
\dfrac{1}{2 \pi \nu r^{\nu-1}}\int_{0}^{2\pi}\dfrac{f(re^{i \theta} + t_n, u(re^{i \theta} + t_n))}{e^{i(\nu-1)\theta}}\mathrm{d}\theta,  & \nu \ge 1.
\end{cases}
\end{align*}
\end{subequations}
The processing of integrals in the continuous sense remains challenging for computer programs and necessitates additional discretization. The trapezoidal rule \cite{Trefethen2014}
 discrete the above formula, we consequently have that
\begin{equation}
\label{eq:trapezoidal_rule}
\frac{u^{(\nu)}(t_n)}{\nu !} \approx
\begin{cases}
\frac{1}{s}\sum\limits_{k=1}^{s}u_k^{[n]}, & \nu = 0, \\
\frac{1}{\nu r^{\nu-1}}\cdot \frac{1}{s}\sum\limits_{k=1}^{s}f_k^{[n]}e^{-i(\nu-1)\theta_k} = \frac{\hat{f}_{\nu-1}^{[n]}}{\nu r^{\nu-1}}, & q \ge \nu \ge 1,
\end{cases}
\end{equation}
where \( u_k^{[n]}, f_k^{[n]} \) stand for the value of functions \( u, f \) at sample point \( t_n + r\omega_k \) and $ s $ means number of sampling points. Our study shows that the choice of \( s \) leads to the following scenarios and corresponding outcomes:
\begin{align*}
s
\begin{cases}
< q, & \text{the scheme is unstable and achieves no computational accuracy}; \\
= q, & \text{the scheme is conditionally stable with \( (q-1) \)-th order accuracy}; \\
\geq q+1, & \text{the scheme is conditionally stable with \( q \)-th order accuracy}.
\end{cases} 
\end{align*}
Although choosing \( s = q+1 \) may seemingly improve the accuracy of the final approximation scheme, the computational cost does not differ significantly from directly increasing \( q \). When \( s > q+1 \), the scheme can only maintain \( q \)-th order accuracy, while its computational cost remains proportional to the value of \( s \).  This paper focuses exclusively on the case \( s = q \) for this reason.
Thence, \eqref{eq:pre_iterator} and \eqref{eq:pre_propagator}, we have 
\begin{subequations}
\begin{align}
u(t_j^{[n+1]}) \approx & u^{n+1} + r \sum\limits_{\nu=1}^{q}\frac{\omega_{j}^{\nu}}{\nu}\hat{f}_{\nu-1}^{[n+1]} \nonumber \\ 
=  & \frac{1}{q}\sum\limits_{k=1}^{q}u_k^{[n+1]} + r \sum_{\nu=1}^{q}\int_{s=0}^{\omega_{j}}s^{\nu-1} \mathrm{d}s \cdot  \hat{f}_{\nu-1}^{[n+1]} =:  u_j^{[n+1]}, \label{eq:component_iterator} \\
\text{ and   ~~~~~~~~~~~~~~} &  \nonumber\\
u(t_j^{[n+1]}) \approx & \frac{1}{q}\sum\limits_{k=1}^{q}u_k^{[n]} + \sum_{\nu=1}^{q} \left(\frac{\hat{f}_{\nu-1}^{[n]}}{\nu r^{\nu-1}}\right) (r \omega_{j} + \tau)^{\nu} \nonumber \\
 \xlongequal{\tau  = \alpha r } & 
\frac{1}{q}\sum\limits_{k=1}^{q}u_k^{[n]} + r \sum\limits_{\nu=1}^{q}\frac{(\alpha+\omega_{j})^{\nu}}{\nu}\hat{f}_{\nu-1}^{[n]} \nonumber \\
= & \frac{1}{q}\sum\limits_{k=1}^{q}u_k^{[n]} + r \sum\limits_{\nu=1}^{q}\int_{s=0}^{\alpha+\omega_j}s^{\nu-1}\mathrm{d}s \cdot \hat{f}_{\nu-1}^{[n]}  =: u_j^{[n+1]}, \label{eq:component_propagator}
\end{align}
\end{subequations}
where \( 1 \le j \le q \).
\begin{figure}[!t]
\centering
\includegraphics[scale=.3]{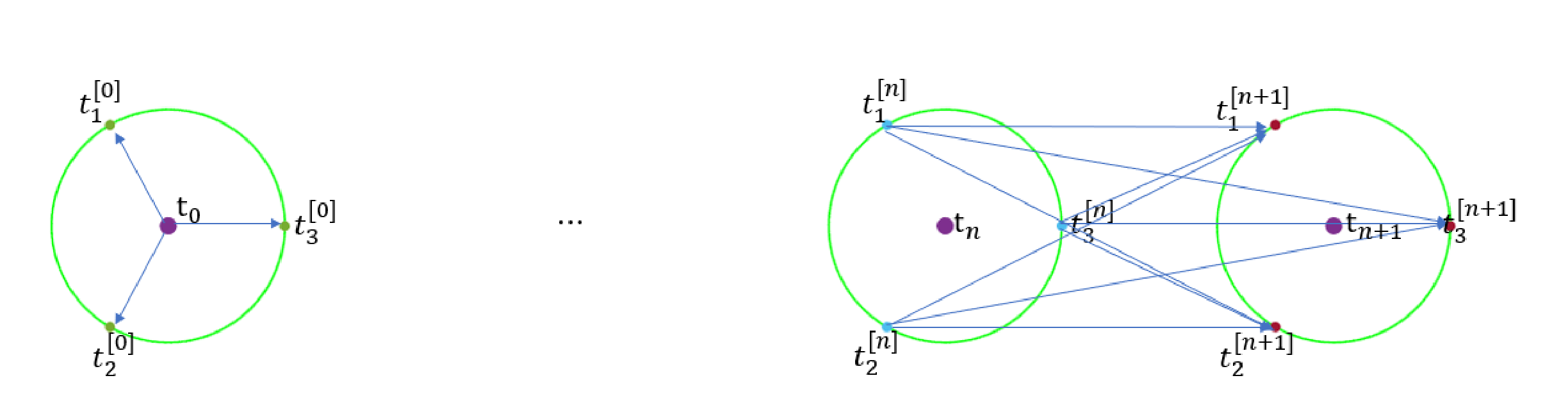}
\caption{Obtain the initial solution vector $\mathbf{u}^{[0]}$ by iterator \eqref{eq:iterator}(Left) and update each of component \( u_j^{[n+1]} \) of solution vector \( \mathbf{u}^{[n+1]} \) by propagator \eqref{eq:propagator} (Right). }
\label{fig:fig_dci}
\end{figure}
Summarily, the component form of time discrete scheme \eqref{eq:component_iterator} and \eqref{eq:component_propagator} can be represented by matrix-vector form
\begin{subequations}\label{eq:time scheme}
\begin{align}
\text{iterator: }  &  \mathbf{u}^{[n+1]} = \mathbf{A}\mathbf{u}^{[n+1]} + r\mathbf{B}(0)\mathbf{f}^{[n+1]}, \label{eq:iterator}\\
\text{propagator: }  &  \mathbf{u}^{[n+1]}=\mathbf{A}\mathbf{u}^{[n]} + r \mathbf{B}(\alpha)\mathbf{f}^{[n]},  \text{ when } n \ge 1, \label{eq:propagator}
\end{align} 
\end{subequations}
where 
\begin{itemize}
\item
\( \mathbf{u}^{[n]}:=[u_j^{[n]}]_{j=1}^q \) and \( \mathbf{f}^{[n]}:=[f_j^{[n]}]_{j=1}^q \) stand for solution vector and right-hand side vector respectively;
\item 
\(  \mathbf{A} \)  is an \( q \)-dimensional all-one square matrix multiplied by \( \frac{1}{q} \);
\item $\mathbf{B}(\alpha) := \mathbf{S}(\alpha)\mathbf{F}$ whose
\begin{align*}
\mathbf{S}(\alpha) =
\begin{bmatrix}
\sigma_{1,1} & \ldots & \sigma_{1,q} \\
\vdots & \vdots & \vdots \\
\sigma_{q,1} & \ldots & \sigma_{q,q} 
\end{bmatrix} 
\text{ and }
\mathbf{F} = 
\frac{1}{q} 
\begin{bmatrix}
\omega_1^0 & \ldots & \omega_q^0 \\
\vdots & \ddots & \vdots \\
\omega_{1}^{1-q} & \ldots & \omega_q^{1-q} \\
\end{bmatrix}
\end{align*}
where \( \sigma_{j,k} = \sigma_{j,k}(\alpha) :=  \int_{x=0}^{\alpha + \omega_j}x^{k-1} \mathrm{d}x\). Here, \( \mathbf{S}(\alpha) \) can be regarded as a Vandermonde matrix in the sense of integration and \( \sqrt{q}\cdot\mathbf{F} \) is a  discrete Fourier matrix.
\end{itemize}
One of the main utilities of the iterator is to obtain the initial solution vector \( \mathbf{u}^{[0]} \) by replacing \( n+1 \) as \( 0 \) directly, namely
\begin{equation}
\mathbf{u}^{[0]} = u^0 \mathbbm{1} + r\mathbf{B}(0)\mathbf{f}^{[0]}
\end{equation}
where \( \mathbbm{1} \) is \( q \)-dimensional all-one column vector. 
\par 
In the original paper \cite{Buvoli2018}, the number of sampling points and the number of terms in the Taylor expansion  are equal. 
Here, we find that the accuracy of the scheme is one order lower that the ideal approximation of order \( q \). The \( q \)-order accuracy is referred to as the ideal approximation because the construction of the scheme involves both the truncation error of the Taylor expansion and the error in approximating the Cauchy integral formula, with the latter achieving exponential convergence, thus not dominating the overall error. Throughout the article, we name the ABTI method with $q$-order accuracy as ABTI$_{q}$ and the number of corresponding sampling points should be $q+1$, that is, the number of items Taylor expands should be $q+1$ when constructing various types.
\par
In addition to its good stability, ABTI also possesses several other features. From equation \eqref{eq:propagator}, it is not difficult to observe that once the matrix is determined, the time-stepping procedure is very similar to the explicit Euler method, making the programming of ABTI very straightforward. Since it is an explicit scheme, we can see that each component of the unknown vector on the left-hand side of \eqref{eq:propagator} can be computed independently. This feature becomes even more evident when the matrix form of the scheme reverts to the component form as presented in the original paper. Upon examining the matrix representation of the scheme, we observe that the matrix exhibits a certain structure, which also opens up possibilities for algorithm acceleration or GPU programming. For example, there are many acceleration algorithms for the multiplication of Fourier matrices and vectors e.g. fast Fourier transform (FFT)\cite{Cooley1965}.
\section{The Main Results}
\label{sec:The Main Results}
This section includes two parts. First, there is the theoretical analysis regarding the conjecture that ABTI possesses limiting stability. Secondly, an analysis of the reasons why the original ABTI cannot achieve the desired convergence order is provided, along with potential solutions.
\subsection{Disproval  of a conjecture}
At the outset of this subsection, we will present a concise overview of the proof methodology that we have devised to evaluate Buvoli's conjecture. A relatively straightforward approach is to identify the characteristic polynomial of the amplification matrix and examine the relationship between the control parameter $ z \in \mathbb{C} $ and the Schur stability of the polynomial. This involves locating $z$ the complex plane in a way that ensures all the zeros of the polynomial are situated within the unit disk. In the event that the amplification matrix is expressed explicitly, the characteristic polynomial is not readily apparent. Initially, it is determined that the matrix $ \mathbf{B}(\alpha) $ in the scheme can be expressed as the product of two matrices \( \mathbf{B}(\alpha) = \mathbf{S}(\alpha)\mathbf{F} \) with a particular structure, using the aforementioned matrix structure and the properties of the block matrix determinant, it was determined that a particularly simple band matrix exhibits an identical characteristic polynomial to that of the matrix $ \mathbf{A} + z \mathbf{B}(\alpha) $. The difference equation is established with the value of the determinant, and the explicit expression the characteristic polynomial is obtained through two algebraic matrix inversions. Subsequently, the analysis of the absolute stability region is transformed into an analysis of the zeros of the polynomial. 
\par 
By employing the analytical approach of \textit{the root locus curve} \cite[V.I]{Hairer1996}, a straightforward variable substitution is employed to derive \textit{the parabolic radius of the stable region}\footnote{Here, the parabolic radius refers to the length of the segment along the real axis, starting from the origin and extending toward \( - \infty \), that reaches the boundary of the stability region. Essentially, this means that the spectral points of the Laplace operator \( \Delta \) in the parabolic equation lie on the negative real axis and can extend to \( - \infty \). The discretization of the Laplace operator, or more specifically, the spectrum of the discretized Laplace operator, must fall within the allowable range of the time scheme's parabolic radius to ensure the stability of the scheme.}, which is a crucial prerequisite for validating the conjecture. Consequently, our focus can be narrowed to the distribution of zeros of a real-coefficient polynomial with solely real zeros. These polynomials can be regarded as a polynomial sequence with respect to the index $q$, and the generating function of this polynomial sequence is obtained by simple derivation. Subsequent value of the polynomial at a specified real point is represented as the Fourier transform value of a function derived from a generating function through the Cauchy integral formula and variable substitution. Ultimately, in consideration of the relationship between the function's smoothness and the decay of the Fourier coefficient, it is demonstrated that the conjecture cannot be guaranteed to hold when $q$ tends to infinity. 
\par 
Nevertheless, this scheme demonstrates superior stability compared to that of ABs of equivalent accuracy, particularly in the case of high-order approximation. The stability of the scheme can be expressed as follows: as the approximation accuracy increases, the stable region will gradually diminish, and the rate of this diminution will become increasingly slow. Consequently, we also provide the quantization of the maximum  approximation accuracy that can guarantee the stability of the scheme for a given parabolic radius. In other words, this is a necessary condition for the stability of the scheme for arbitrary approximation accuracy when solving parabolic problems. Conversely, for a given precision, the parabolic radius can be determined by solving for the smallest modulus zero of the polynomial.
\par
To highlight the core ideas of the proof and enhance the readability of the article, we have placed the derivation of the characteristic polynomial of \(  \mathbf{A} + z \mathbf{B} \) in the appendix \ref{sec:Derivation of Characteristic Polynomial}. In this section, we only present the final expression of the characteristic polynomial
\begin{equation}
\label{eq:characteristic polynomial}
 p_q(\lambda; \mathbf{A} + z \mathbf{B}) = f_q( \lambda; z) + f _{q-1}( \lambda; z) - \gamma_{q}(-z/\alpha), 
\end{equation} 
where \( f_q( \lambda; z) :=  \sum_{j=0}^{q} \gamma_{q-j}((j+1)z)(-\lambda)^{j} \).

\par
From the Figure \ref{fig:zeta_z_plane}, we can observe that the stability region of ABTI is the common area enclosed by a series of \textit{roots locus curve} of the polynomial \( p_q( z; \theta) := p_q(e^{i \theta}; \mathbf{A} + z \mathbf{B}) \) with respect to \( z = \lambda \tau \in \mathbb{C} \), parameterized by \( \theta \in [-\pi, \pi) \). These \textit{roots locus curve} can be divided into two types: one is a pseudo-semicircle \( \Gamma^{(1)} \) passing through the origin, and the other consists of multiple quasi-concentric circles \( \Gamma^{(2)}_j \) that do not pass through the origin. We can order the set \( \{\Gamma^{(2)}_j \}_{j \ge 1} \) by their distance from the origin, such that \( \Gamma^{(2)}_1 \) is the quasi-circle closest to the origin that does not pass through it. 
\par
For convenience in handling and observation, we divide the original polynomial by a factor \( (-e^{i \theta})^q \), which does not change the zeros of the original polynomial, and introduce the variable \( \zeta = - e^{-i \theta} z \). This results in the following variant polynomial,
\begin{equation}
\tilde{p}_{q}(\zeta; \theta) := \sum_{j=0}^{q} \gamma_{q-j}((j+1) \zeta) - e^{-i \theta}\sum_{j=0}^{q-1} \gamma_{q-1-j}((j+1)\zeta) - \gamma_{q}(-\zeta/\alpha), 
\end{equation}
whose zeros have the same modulus as those of \( p_{q}(z; \theta) \). Without loss of generality, we set the parameter $\alpha=1$ throughout this paper unless otherwise stated.
\par 
{The existence of a limiting parabolic radius is a necessary condition for the conjecture to hold. Considering the parabolic radius is not merely to restrict the characteristic polynomial to real coefficients, but rather because for certain polynomials \( p \) with \( q \geq 1 \), the \textit{roots locus curve} \( \Gamma^{(1)} \) may not necessarily include the imaginary axis. These pseudo-semicircles, which do not contain the imaginary axis, will slightly deviate away from the imaginary axis as they approach it. This characteristic limits the ability of the ABTI scheme to handle hyperbolic problems and provides one of the reasons for considering the parabolic radius instead of the hyperbolic radius. If we construct a circle \( \Gamma^* \) with the origin as the center and the parabolic radius as the radius, it represents the largest semicircular region that can be contained within the stability region. This property requires examination through the deformed polynomial. Notably, there is a one-to-one correspondence between \( \Gamma^{(2)}_1 \) and \( \gamma^{(2)}_1 \), where \( \gamma^{(2)}_1 \) is a convex small circle on \( \zeta \)-plane with similar definition of \( \Gamma^{(1)} \) or \( \Gamma^{(2)}_{j} \). Therefore, we know that the distance between \( \Gamma^{(2)}_{1} \) and the origin increases gradually from \( \theta = \pm \pi \) to \( \theta = 0 \). In other words, the region enclosed by \( \Gamma^* \) can be completely contained within the region enclosed by \( \Gamma^{(2)}_1 \). Through experiments, we further observe that the radius of \( \gamma^{(2)}_1 \) decreases as \( q \) increases, causing \( \Gamma^{(2)}_1 \) to approach a more regular circular shape.}
Another reason is that considering the equivalent polynomial allows for a more concise analysis and expression.

\begin{figure}[!h]
\centering
\includegraphics[scale=.3]{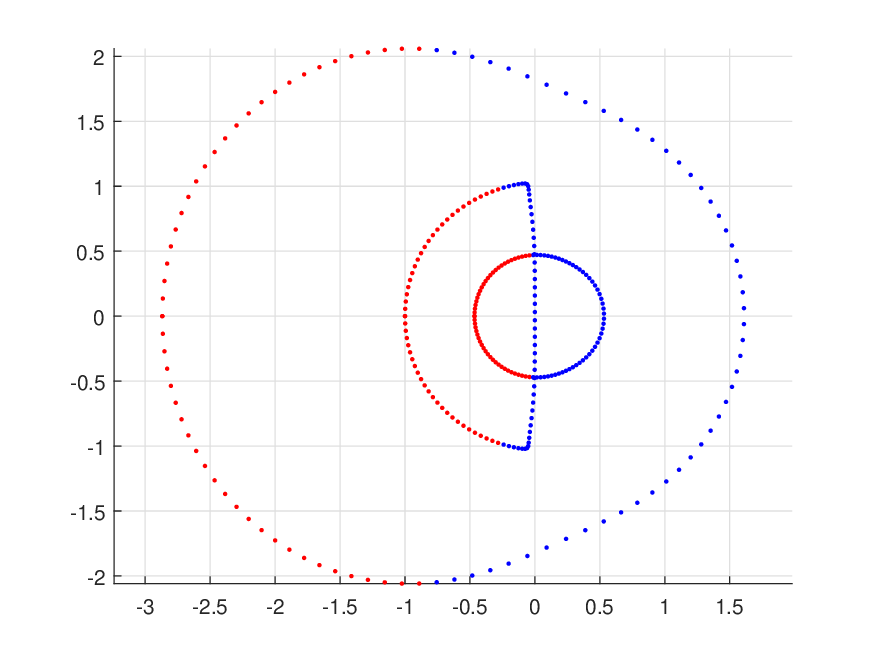}
\includegraphics[scale=.3]{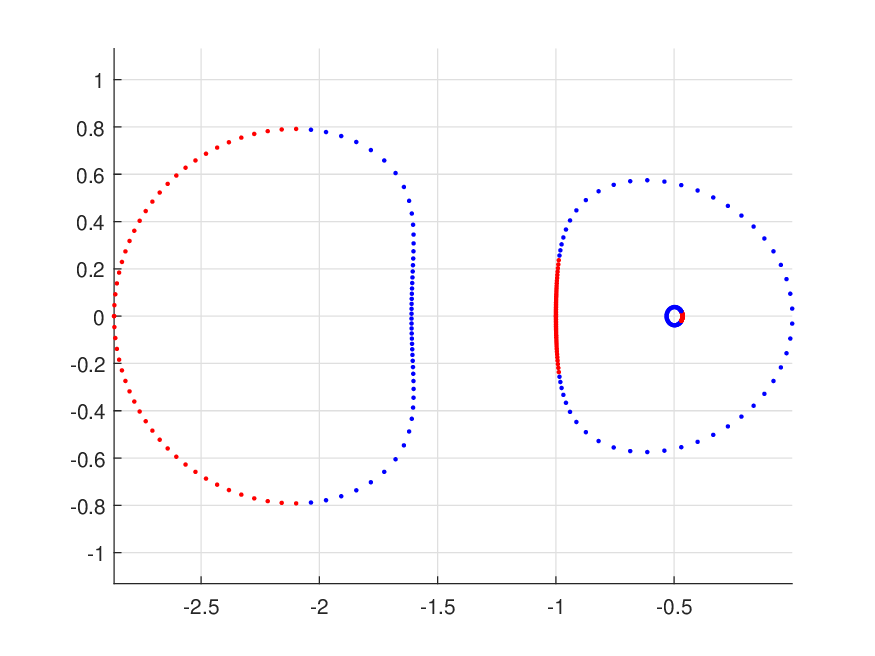}
\caption{The zeros of \( p_{4}(z; \theta) \) on the \( z \)-plane (Left) and  the simple transformation \( \zeta(\theta) = - e^{-i \theta} z(\theta)  \) on the \( \zeta \)-plane (Right). The zeros $\zeta(\theta)$ and $z(\theta)$ be marked by blue whenever \( \theta \in [-\tfrac{\pi}{2}, \tfrac{\pi}{2}] \).}
\label{fig:zeta_z_plane}
\end{figure}
\begin{lemma}
\label{lem:real zeros}
All zeros of  polynomial \( \tilde{p}_n( \zeta ; \theta ) \) lie in the real axis when \( \theta= 0 \) or \( \pi \). Moreover, the parabolic radius used in the linear stability analysis of ABTI is given by the minimum modulus of the zeros of \( \tilde{p}_n( \zeta ; \pi ) \).
\end{lemma}
\begin{proof}
From the expression of polynomial \( \tilde{p}_n(\zeta; \theta) \), it is easy to check that  
\begin{equation}
\tilde{p}_n(\zeta; \theta) -  \tilde{p}_{n}(\bar{\zeta}; - \theta) = 2 \mathtt{Im}(\tilde{p}_n(\zeta; \theta)). \nonumber
\end{equation}
Here, we utilize the fact that \( \tilde{p}_n(\zeta; \theta) \) and \( \tilde{p}_{n}(\bar{\zeta}; - \theta) \) form a pair of complex conjugates. For arbitrarily fixed \( \theta \in [- \pi, \pi] \), if \( \zeta_0 \) is the zero of the polynomial \( \tilde{p}_n(\zeta; \theta) \), we can lead  \( \tilde{p}_{n}(\bar{\zeta}_{0}; - \theta)  = 0 \) from the fact \( \tilde{p}_n(\zeta_0; \theta) =  2 \mathtt{Im}(\tilde{p}_n(\zeta_0; \theta)) = 0 \)  in preceding formula. This implies that the zeros of polynomial \( \tilde{p}_n(\zeta; \theta) \) must be zeros of \( \tilde{p}_{n}(\bar{\zeta}; - \theta)  \). In other word, the distribution of  the zeros associated with \( \theta \) are strictly symmetric about real axis in the Figure \ref{fig:zeta_z_plane} from the continuous dependence of zero distribution of polynomials on coefficients. We demonstrate the case for \( \theta = 0 \) as an example. The two families of zeros of \( \tilde{p}_{n}(\bar{\zeta}; \varepsilon ) \) and \( \tilde{p}_{n}(\bar{\zeta}; - \varepsilon) \) are symmetrically distributed on both sides of the real axis for any \( \varepsilon > 0 \). As \( \varepsilon \to 0 \), the zeros of \( \tilde{p}_{n}(\bar{\zeta}; \pm \varepsilon ) \) continuously approach their axis of symmetry, meaning that the zeros of \( \tilde{p}_{n}(\bar{\zeta}; 0) \) lie on the real axis. Similarly, we can also show that for \( \theta = \pi \), the zeros of \( \tilde{p}_{n}(\bar{\zeta}; \pi) \) lie on the real axis. Therefore, we can conclude this lemma. 
\end{proof}
\begin{remark}
This lemma requires us to rely on two key observations from the right panel of Fig. 2:
\begin{enumerate}[(1)]
\item 
Each closed curve in the figure represents a certain zero family $\zeta(\theta)$ of the polynomial $\tilde{p}_n(\zeta; \theta)$. Based on extensive experimental observations, the number of these closed curves exactly equals the degree of the polynomial, and the curves never intersect. According to the Fundamental Theorem of Algebra, this rules out the possibility that the real-coefficient polynomial $\tilde{p}_n(\zeta; \pi)$ has a pair of conjugate zeros, as otherwise the number of solutions would not match the degree of the polynomial.
\item 
When plotting these closed curves, we observe that as $\theta$ increases from $-\pi$ to $\pi$, the curves evolve along a consistent direction.
\end{enumerate}
Here, special attention must be paid: when $\alpha = 1$ and $q$ is even, the highest-degree term in $\sum_{j=0}^{q} \gamma_{q-j}((j+1) \zeta)$ is exactly canceled by $\gamma_{q}(-\zeta/\alpha)$. In this case, $\tilde{p}_n(\zeta; \theta)$ becomes a polynomial of degree $q-1$ only. The discrete appearance of the closed curves in the figure is due to the fact that the plot is generated using a finite set of discrete points $\theta$, obtained by uniformly partitioning the interval $[-\pi, \pi]$.
\end{remark} 
\begin{remark}
From another rigorous deduction, we can also judge that \( \tilde{p}_n(\zeta; \theta = 0 \text{ or } \pi) \) is the log-concave polynomial \cite{Huh2022} which has only real zeros, for which the polynomial coefficient are a unimodal sequences \cite{Liu2007}. This kind of polynomial is widely used in combinatorial mathematics.  
\end{remark}
When \( \theta = 0 \), the zeros of the polynomial are non-negative. When \( \theta = \pi \), the zeros of the polynomial are strictly negative. At this point, the absolute value of the zero with the smallest modulus is exactly the parabolic radius of the stability region of the ABTI method. This is the content that the following theorem needs to examine.
\begin{theorem}\label{lem:disprove}
For all \( 1 \le n < \infty \), the zeros of the polynomial \( \tilde{p}_n( \zeta; \pi) \) are real and distinct. Furthermore, for any finite \( N \in \mathbb{N} \), there exists a non-negative \( r_{N} \) such that the interval $( - r_{N}, 0]$ contain no zeros of \( \tilde{p}_{n}(\zeta; \pi) \) for all \( n < N \). In particular, \( r_{N} \) will slowly decrease with the increase of \( N \). 
\end{theorem}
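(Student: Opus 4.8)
The plan is to collapse the matrix problem to a single real-coefficient polynomial sequence and study it through its generating function. At $\theta=\pi$ we have $e^{-i\theta}=-1$, so writing $A_q(\zeta):=\sum_{j=0}^{q}\gamma_{q-j}\big((j+1)\zeta\big)$ the deformed polynomial reads $\tilde p_q(\zeta;\pi)=A_q(\zeta)+A_{q-1}(\zeta)-\gamma_q(-\zeta/\alpha)\delta_q$, a genuine polynomial of degree $q$. First I would interchange the two summations in $A_q$ to obtain the ordinary generating function
\[
\sum_{q\ge 0}A_q(\zeta)\,t^{q}=\sum_{j\ge 0}t^{j}e^{(j+1)\zeta t}=\frac{e^{\zeta t}}{1-t e^{\zeta t}},\qquad \Phi(\zeta,t):=\sum_{q\ge 0}\tilde p_q(\zeta;\pi)\,t^{q}=\frac{(1+t)e^{\zeta t}}{1-t e^{\zeta t}},
\]
the second identity holding away from the $\delta_q$ correction. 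Differentiating the first identity in $\zeta$ and matching coefficients yields the first-order relation $A_q'=qA_{q-1}-\zeta A_{q-1}'$, equivalently $\tfrac{d}{d\zeta}\big(A_q+\zeta A_{q-1}\big)=(q+1)A_{q-1}$, which is the engine of the induction.

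For reality and simplicity of the zeros I would prove, by induction on $q$, the stronger statement that $A_{q-1}$ and $A_q$ have only simple, strictly negative zeros which strictly interlace. Since every coefficient of $A_q$ is positive, $A_q>0$ on $[0,\infty)$ and each real zero is automatically negative, confining the analysis to $(-\infty,0)$. For the step, evaluating the relation above at a zero $\rho<0$ of $A_q$ gives $A_{q+1}'(\rho)=-\rho\,A_q'(\rho)$, whose sign equals that of $A_q'(\rho)$; hence $A_{q+1}'$ alternates sign across consecutive zeros of $A_q$ and, by Rolle's theorem, acquires one zero in each of the $q-1$ gaps, the boundary data $A_{q+1}'(0)=q+1>0$ and $A_{q+1}'(\zeta)\sim \zeta^{q}/q!$ locating the last one. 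The integrated form $A_{q+1}(\zeta)=1-\zeta A_q(\zeta)+(q+2)\int_0^{\zeta}A_q$ then recovers $A_{q+1}$ and its interlacing with $A_q$. Once $(A_{q-1},A_q)$ is known to interlace with positive leading coefficients, the Hermite–Kakeya–Obreschkoff theorem makes the nonnegative combination $\tilde p_q(\zeta;\pi)=A_q+A_{q-1}$ real-rooted with simple, strictly negative zeros, which is the log-concavity/Pólya-frequency phenomenon recorded in the Remark.

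The second assertion is then a short corollary. For each finite $n$ the polynomial $\tilde p_n(\zeta;\pi)$ has only strictly negative zeros and $\tilde p_n(0;\pi)=2\neq 0$, so its zero nearest the origin is some $-\rho_n$ with $\rho_n>0$ and $(-\rho_n,0]$ is zero-free. Setting $r_N:=\min_{1\le n<N}\rho_n$, a minimum of finitely many positive numbers, produces a single $r_N>0$ for which $(-r_N,0]$ simultaneously avoids the zeros of every $\tilde p_n(\zeta;\pi)$ with $n<N$.

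The hard part is the inductive propagation of interlacing: passing from the sign pattern of $A_{q+1}'$ to the zeros of $A_{q+1}$ requires controlling the integration constant, i.e. showing the specific antiderivative keeps exactly one zero per gap rather than shedding a conjugate pair off the real axis. I expect to close this by tracking the signs of $A_{q+1}$ at the zeros of $A_q$ through the integrated relation; an alternative that dovetails with the companion disproof is to represent $\rho^{q}\tilde p_q(\zeta;\pi)$ as the $q$-th Fourier coefficient of $\phi\mapsto\Phi(\zeta,\rho e^{i\phi})$ and to count sign changes on $(-\infty,0)$ via the poles $t e^{\zeta t}=1$, whose transition from a real to a complex-conjugate pair occurs exactly at $\zeta=-1/e$. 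A secondary issue is the term $-\gamma_q(-\zeta/\alpha)\delta_q$, present only when $s=q$: it perturbs solely the leading coefficient, so I would check separately that in the admissible range of $\alpha$ this perturbation keeps that coefficient positive and preserves real-rootedness.
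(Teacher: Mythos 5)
Your generating-function computation and the derivative recurrence $A_{q+1}'=(q+1)A_q-\zeta A_q'$ are correct (one checks $\partial_\zeta\Phi=t\Phi+t^2\partial_t\Phi-\zeta t\,\partial_\zeta\Phi$), and your treatment of the second assertion --- $\tilde p_n(0;\pi)=2\neq 0$, so the nearest zero sits at some $-\rho_n<0$, and $r_N:=\min_{1\le n<N}\rho_n$ works --- is essentially the paper's own argument, which invokes continuity at the common point $(0,2)$ and finiteness of the index set. Where you genuinely diverge is the first assertion. The paper does not prove ``real and distinct'' inside this theorem: reality is delegated to the preceding symmetry lemma (whose argument only shows the zero set is invariant under conjugation, which does not by itself force zeros onto the real axis) and to a remark citing log-concavity, and distinctness is never addressed; the bulk of the paper's proof is instead the harmonic-analysis step ($\tilde{\mathcal P}(\cdot;\zeta)\in\mathtt{BV}$ forces $\tilde p_n(\zeta)=\mathcal O(n^{-1})\to 0$ on $(-1/e,0]$) that disproves Buvoli's conjecture --- content your proposal omits entirely, though it is not literally part of the quoted statement. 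Your interlacing/Hermite--Kakeya--Obreschkoff route, if completed, would actually be stronger than what the paper offers for the claim as stated.

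But it is not yet complete, and you have correctly located the hole. Knowing that $A_{q+1}'$ has $q$ simple negative zeros, one in each gap of the zeros of $A_q$ and one to their left, does not give real-rootedness of $A_{q+1}$: an antiderivative of a real-rooted polynomial can acquire a conjugate pair ($x^2+1$ versus $2x$), and the integrated identity $A_{q+1}(\zeta)=1-\zeta A_q(\zeta)+(q+2)\int_0^\zeta A_q$ does not by itself control the signs of $A_{q+1}$ at its critical points. The step that still needs an argument is precisely the alternation of the values $A_{q+1}(\rho)=1+(q+2)\int_0^\rho A_q$ over consecutive zeros $\rho$ of $A_q$; until that is proved, the induction does not close and HKO cannot be invoked. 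The second unresolved point is the $\delta_q$ correction: $-\gamma_q(-\zeta/\alpha)$ alters only the leading coefficient, but altering a leading coefficient can destroy real-rootedness, and for $\alpha=1$ with $q$ even it kills the leading coefficient outright and drops the degree, so ``check separately'' is not a proof. Since the paper's quantitative corollaries work with $s=q+1$ (where $\delta_q=0$), restricting the theorem to that case would sidestep this, but as written your argument does not cover $s=q$.
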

\begin{proof}
\par
In linear stability analysis, when examining the parabolic radius of a scheme, the domain of \( \zeta \) can be restricted from the entire complex plane to the real axis. Lemma \ref{lem:real zeros} specifies the corresponding values of \( \theta \) in this scenario. 
\par
Denote the polynomial sequences \( \tilde{f}_n( \zeta ) := \sum_{j=0}^{n}\gamma_{n-j}((j+1) \zeta) \) and its generating function denoted by
\begin{align*}
\widetilde{ \mathsf{F}}(\zeta,t)  & := \sum_{n=0}^{\infty}\tilde{f}_{n}(\zeta)t^{n} = \sum_{n=0}^{\infty}\left(\sum_{j=0}^{n}\frac{((j+1)\zeta)^{n-j}}{(n-j)!}\right)t^{n} \\
= & \sum_{j=0}^{\infty}\sum_{n=j}^{\infty}\frac{((j+1)\zeta)^{n-j}}{(n-j)!}t^{n} =  \sum_{j=0}^{\infty}\sum_{n=0}^{\infty}\frac{((j+1)\zeta)^{n}}{n!}t^{n+j}\\
 = & \sum_{j=0}^{\infty}t^{j} e^{(j+1)\zeta t} = e^{\zeta t}\sum_{j=0}^{\infty}(t e^{\zeta t})^{j} = \frac{1}{e^{- \zeta t} - t} \text{ with \( | t e^{\zeta t} | < 1 \).}
\end{align*}
Where we interchange the order of summation and recalling the range of index \( n \).
Thence, 
\begin{equation}
\widetilde{ \mathsf{P} }(t; \zeta)  = \frac{ 1 + t }{e^{-\zeta t}-t} - e^{-\zeta t} \nonumber
\end{equation}
is the generating function of \( \tilde{p}_{n}(\zeta; \pi) \) which can be obtained by similar way. 
For \( \zeta \in \mathbb{R}, t \in \mathbb{C} \), the condition \( \zeta < 0, \lvert t \rvert \le 1 \) leads to  \(  | t e^{\zeta t} | < 1\) sufficiently. In fact, by expressing $|t| \le 1$ in polar coordinates as $t = \rho e^{i \theta}$, where $0 \le \rho \le 1$ and $\theta \in [0, 2\pi)$, we obtain $|t e^{\zeta t}| = \rho \cdot e^{\zeta \rho \cos(\theta)}$. At this point, we observe that $e^{\zeta \rho \cos(\theta)} < 1$ together with $\rho \le 1$ directly implies $\rho \cdot e^{\zeta \rho \cos(\theta)} < 1$.  In this way, the value range of variables \( \zeta, t \) in the equation \eqref{eq:to_Fourier_coef} can ensure the convergence of the series when the generating function is derived.
In order to confirm the range of the \( \tilde{p}_{n}(\zeta) \), we turn it to a form of Fourier transformation. 
\begin{equation}
\label{eq:to_Fourier_coef}
\tilde{p}_{n}(\zeta) := \tilde{p}_{n}(\zeta; \pi) = \frac{1}{2 \pi i} \oint_{\left|t\right|=1} \frac{\widetilde{\mathsf{P} }(t; \zeta)}{t^{n+1}}  \mathrm{d}t = \frac{1}{2 \pi}\int_{0}^{2 \pi}\tilde{\mathcal{P}}(\varphi; \zeta)\cdot e^{-i  n \varphi} \mathrm{d}\varphi ,
\end{equation} 
where \( \tilde{\mathcal{P}}(\varphi; \zeta) := \widetilde{ \mathsf{P} }( e^{i \varphi}; \zeta)  \) is a function with period \( 2 \pi \) about \( \varphi \in \mathbb{R} \).
Thus, we have a bridge between the value of the sequences \( \tilde{p}_{n}(\zeta) \) and the spectrum of the function \( \tilde{\mathcal{P}}(\varphi; \zeta) \). 
\par  
If Buvoli's conjecture is valid, then \( \tilde{p}_{n \to \infty }(\zeta) > 0 \) for all \( \zeta \in (-1/e,0) \). However, we can disprove this result from the view of harmonic analysis. 
We notice that  \( \tilde{{\mathcal{P}}}(\varphi; \zeta) \in \mathtt{BV}(\mathbb{R}\times \mathbb{R}) \) which also can be observed from the Figure~\ref{fig:disprove}  and the Figure~\ref{fig:REAL AND IMAGE OF P} directly.  
This can also be proved based on the fact that a complex-valued function is of bounded variation if and only if its real and imaginary parts are both of bounded variation. 
\cite[Table 3.1]{Grafakos2014} has shown the relation between the decay rate of Fourier coefficients of a function \( \tilde{\mathcal{P}}(\varphi; \zeta) \) associated with variable \( \varphi \) and whose smooth properties, viz, $\tilde{p}_{n} ( \zeta )  = \mathcal{O}(n ^{-1})$ since \( \tilde{\mathcal{P}}(\varphi; \zeta) \in \mathtt{BV}(\mathbb{R}\times \mathbb{R}) \)  for all \( \zeta \in (-1/e, 0) \).  Therefore, for arbitrary \( \zeta \in (-1/e, 0) \), \( \tilde{p}_{n \to \infty }(\zeta) \to 0 \). 
\par 
Since the continuity of polynomial \( \tilde{p}_{n}(\zeta) \) and it across the common point \( (0,2) \)  for all \( n \ge 1 \), there are \( N \)  for all \( r_{N} \in (-1, 0) \) such that \( \tilde{p}_{n}(\zeta) > 0 \) whenever \( n \le N \).
\end{proof}
\begin{remark}
Even so, this method has significant advantages compared to traditional AB methods of order 4 and above. By presetting an appropriate parabolic radius, we can determine the maximum permissible accuracy by calculating the integral expression on the right side of the equation \eqref{eq:to_Fourier_coef}. Taking Buvoli's conjectured radius \( 1/e \) as an example, using Trefethen's numerical integration toolbox \footnote{\url{https://www.chebfun.org}} and setting the tolerance of integrand more than \(  10 ^{-12} \), we can determine that the maximum permissible accuracy is $n = 30$. When the radius equals to AB4, namely \( r_n = 0.3 \), the maximum permissible accuracy is $n = 56$.
\begin{table}[h]
\caption{\large The  maximum permissible accuracy for given parabolic radius.}
\begin{tabular}{cccccc} 
\hline\noalign{\smallskip}
\( r =  \) & \( 0.6 \) & \( 0.5 \) & \( 0.4 \) & \( \mathbf{1/e} \) & \( 0.3 \) \\
\noalign{\smallskip}\hline\noalign{\smallskip}
\( n \le  \) & 1 & 2 & 6 & 30 & 56  \\ 
\noalign{\smallskip}\hline
\end{tabular}
\end{table}
\begin{figure}[!h]
	\centering
	\includegraphics[scale=.3]{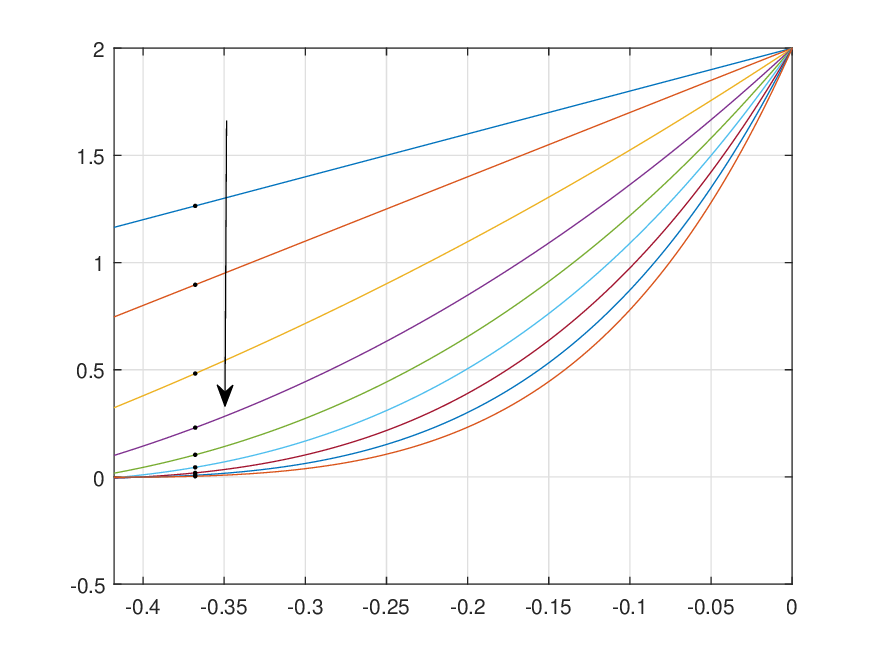}
	\includegraphics[scale=.3]{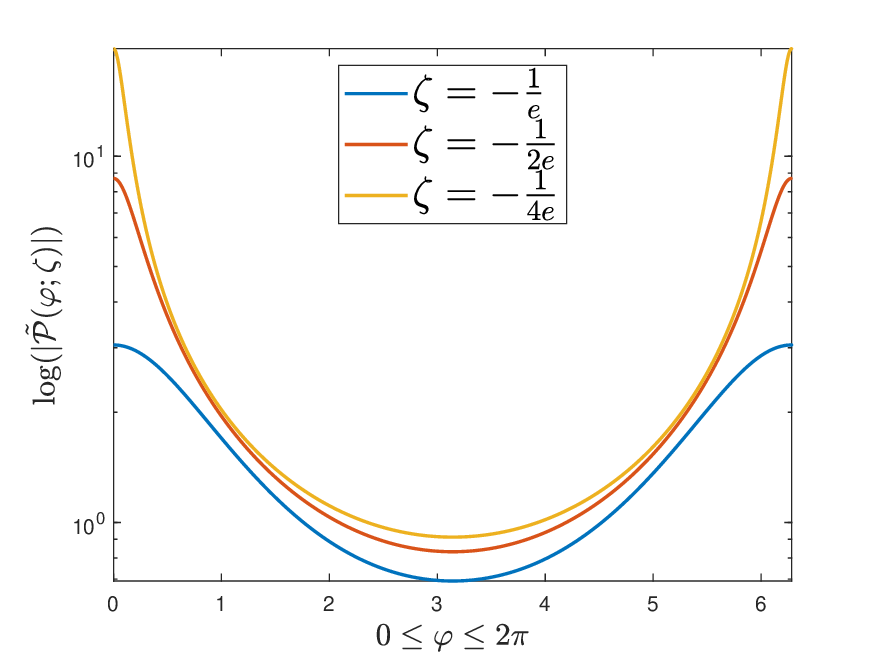}
	 \caption{The local plot of polynomial sequence \( \tilde{p}_{n}(\zeta) \) (Left) and the graph of \( \lvert \tilde{\mathcal{P}}(\varphi; \zeta) \rvert \) with \( \varphi \in [0, 2 \pi] \) and \( \zeta = - \tfrac{1}{e}, - \tfrac{1}{2e}, - \tfrac{1}{4e} \)(Right)}
	\label{fig:disprove}
\end{figure} 
\begin{figure}[!h]
	\centering
	\includegraphics[scale=.3]{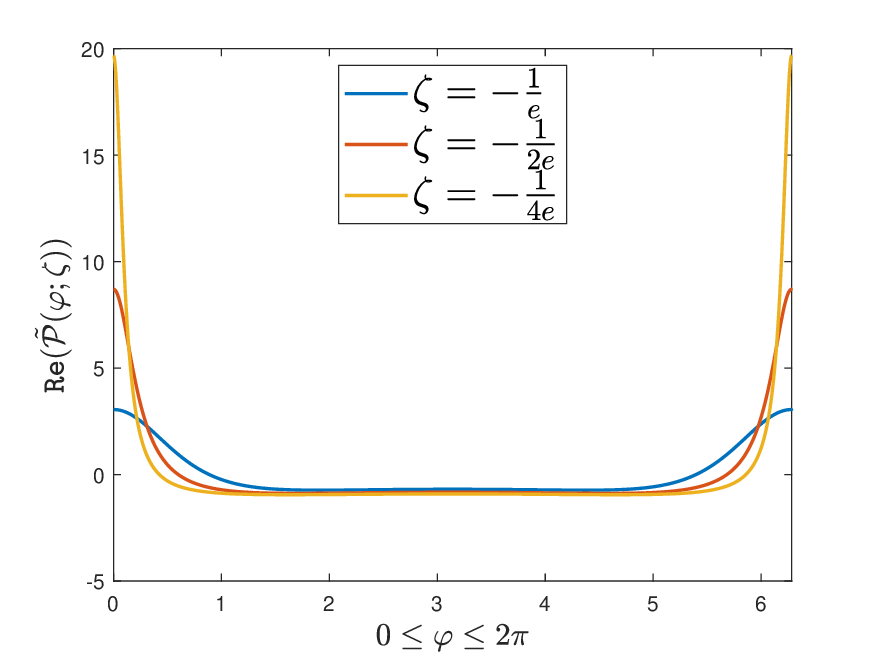}
	\includegraphics[scale=.3]{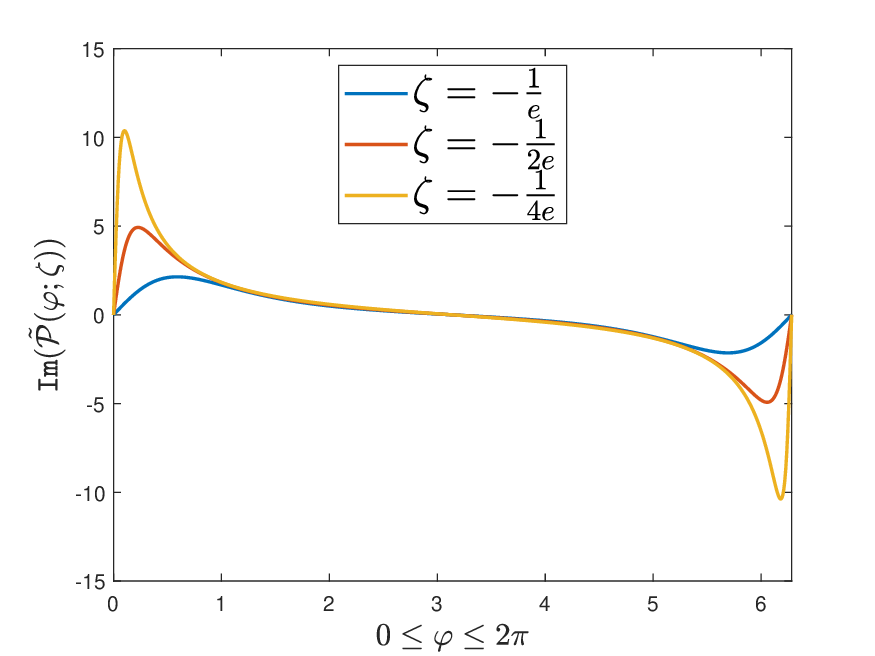}
	 \caption{The graph of \( \mathtt{Re}( \tilde{\mathcal{P}}(\varphi; \zeta) ) \) (Left) and \( \mathtt{Im}( \tilde{\mathcal{P}}(\varphi; \zeta) ) \) (Right)  with \( \varphi \in [0, 2 \pi] \) and \( \zeta = - \tfrac{1}{e}, - \tfrac{1}{2e}, - \tfrac{1}{4e} \)}
	\label{fig:REAL AND IMAGE OF P}
\end{figure} 
From the left subfigure of Figure~\ref{fig:disprove} of the polynomial sequence, it can be observed that the points \(  (0, 2) \) represent a common intersection. Furthermore, the direction of the arrows in the graph indicates the direction of change in the polynomial graph as the value of \( n \) increases. If Buvoli's hypothesis is accurate, then the black marker point \( (-1/e, \tilde{p}_n(-1/e) ) \) in the graph will approach the real axis as \( n \) increases, but will never cross it.

\end{remark}
Rephrasing the above theorem, we obtain the following criterion for determining the conditions that ensure the stability of the scheme based on the given parabolic radius.
\begin{corollary}
Given the parabolic radius \( r_N < r_1 \)  where \( r_1 \) stands for the parabolic radius of the first-order ABTI method and  denoted \( \tilde{\mathcal{P}}(\varphi; \zeta) := \widetilde{ \mathsf{P} }( e^{i \varphi}; \zeta) \) with 
\begin{align*}
\widetilde{ \mathsf{P} }(t; \zeta)  = \frac{ 1 + t }{e^{-\zeta t}-t} - e^{-\zeta t},
\end{align*} 
if for all \( \zeta \in (r_N, 0] \) it holds that
\begin{equation}
 \frac{1}{2 \pi}\int_{0}^{2 \pi}\tilde{\mathcal{P}}(\varphi; \zeta)\cdot e^{-i  N \varphi} \mathrm{d}\varphi > 0,  
 \label{eq:discriminant}
\end{equation}
then ABTI can maintain stability for the approximation accuracy up to \( N \).
\end{corollary}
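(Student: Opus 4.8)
The plan is to observe that the left-hand side of \eqref{eq:discriminant} is nothing but the polynomial value $\tilde{p}_N(\zeta;\pi)$ itself, so that the criterion reduces to a statement about the location of the zeros of $\tilde{p}_N$. Indeed, setting $n=N$ in the Fourier-coefficient representation \eqref{eq:to_Fourier_coef} and substituting the stated generating function $\widetilde{\mathsf{P}}(t;\zeta)$ gives
\[
\frac{1}{2\pi}\int_{0}^{2\pi}\tilde{\mathcal{P}}(\varphi;\zeta)\,e^{-iN\varphi}\,\mathrm{d}\varphi=\tilde{p}_N(\zeta;\pi),
\]
so the hypothesis of the corollary is precisely $\tilde{p}_N(\zeta;\pi)>0$ for every $\zeta\in(r_N,0]$. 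This first step requires no new estimate: it is only a matter of matching the contour integral over $|t|=1$ against the residue expression already derived in \eqref{eq:to_Fourier_coef}.

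The second step converts positivity into a bound on the parabolic radius. By Theorem \ref{lem:disprove} the zeros of $\tilde{p}_N(\cdot;\pi)$ are real and distinct, and, as recorded just before that theorem, they are strictly negative; moreover $\tilde{p}_N(0;\pi)=2>0$ is the common intersection point $(0,2)$ shared by the whole sequence (it follows from $\gamma_{m}(0)=\delta_{m,0}$ applied to \eqref{eq:characteristic polynomial} at $\theta=\pi$). Hence positivity on $(r_N,0]$ forces the smallest-modulus zero $\zeta^{*}_N$ of $\tilde{p}_N(\cdot;\pi)$ to satisfy $\zeta^{*}_N\le r_N$, that is $|\zeta^{*}_N|\ge|r_N|$. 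Since the parabolic radius at accuracy $N$ is by definition the modulus of this smallest-modulus negative zero, the interval $(r_N,0]$ is then a zero-free interval and the parabolic radius is at least $|r_N|$.

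The final step is geometric: I would upgrade the scalar bound $|r_N|$ to the statement that the semicircular region $\Gamma^{*}$ of radius $|r_N|$ about the origin lies inside the absolute stability region $\mathcal{S}_N$ of Definition \ref{def:absolute_stability_region}, whence the scheme is stable whenever the (negative real) spectrum of the discrete operator falls inside $\Gamma^{*}$ under the CFL condition. For this I would invoke the root-locus description surrounding Figure \ref{fig:zeta_z_plane}: the innermost non-origin locus $\Gamma^{(2)}_1$ corresponds bijectively to the convex curve $\gamma^{(2)}_1$ on the $\zeta$-plane whose distance to the origin increases monotonically from $\theta=\pm\pi$ to $\theta=0$, so that $\Gamma^{*}$ — determined by the $\theta=\pi$ zero — is inscribed in the region bounded by $\Gamma^{(2)}_1$, while the pseudo-semicircle $\Gamma^{(1)}$ closes off the imaginary side. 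To read ``accuracy $N$'' as ``accuracy up to $N$'', I would use the monotone shrinking of the stable region in $q$, i.e.\ that the smallest-modulus zero of $\tilde{p}_n(\cdot;\pi)$ drifts toward the origin as $n$ grows, so the binding constraint is always the top order and verifying $n=N$ certifies every $n\le N$.

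The main obstacle is this last, geometric-and-monotonicity step rather than the algebraic identity of the first step. One must rigorously justify that the one-parameter control at $\theta=\pi$ governs the full two-parameter family of loci — in particular that $\Gamma^{*}$ is genuinely inscribed in the region cut out by $\Gamma^{(2)}_1$ for all $\theta\in[-\pi,\pi)$, and that $|\zeta^{*}_n|$ is monotone in $n$. These facts are visible in Figure \ref{fig:zeta_z_plane} and are what Theorem \ref{lem:disprove} rests on, but a fully rigorous argument would require controlling the joint dependence of the zeros of $\tilde{p}_n(\zeta;\theta)$ on both $\theta$ and $n$, which is the delicate part; the Fourier-coefficient reduction and the real-rootedness supplied by Theorem \ref{lem:disprove} then do the remainder.
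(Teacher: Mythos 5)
Your proposal matches the paper's intended argument: the paper offers no explicit proof of this corollary, presenting it only as a ``rephrasing'' of Theorem \ref{lem:disprove}, and the chain you reconstruct --- the integral in \eqref{eq:discriminant} equals $\tilde{p}_N(\zeta;\pi)$ by \eqref{eq:to_Fourier_coef}, positivity on $(r_N,0]$ gives a zero-free interval and hence a lower bound on the parabolic radius, and the root-locus discussion around Figure \ref{fig:zeta_z_plane} inscribes the semicircle $\Gamma^{*}$ in the stability region --- is exactly the argument the paper relies on. The gaps you flag (monotonicity of the smallest-modulus zero in $n$, and the rigorous containment of $\Gamma^{*}$ within the region bounded by $\Gamma^{(2)}_1$ for all $\theta$) are present in the paper as well, where they are supported only by the numerical observations preceding Lemma \ref{lem:real zeros} and in Corollary \ref{coro:the smallest modulus zero of the polynomial}.
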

\begin{corollary}
\label{coro:the smallest modulus zero of the polynomial}
For any given order of approximation \(n\), the parabolic radius \(r_n\) is the smallest modulus zero of the polynomial $\tilde{p}_{n+1}(\zeta; \pi)$. 
The table below lists the parabolic radii for some specific cases. 
\begin{table}[h]
\caption{The parabolic radius for given order of approximation.}
\begin{tabular}{cccccc} 
\hline\noalign{\smallskip}
\( n  \) & \( 4 \) & \( 5 \) & \( 6 \) & \( 7 \) & \( 8 \) \\
\noalign{\smallskip}\hline\noalign{\smallskip}
\( r_{n}  \) & 0.4359 & 0.4110 & 0.4012 & 0.3933 & 0.3882 \\ 
\noalign{\smallskip}\hline
\end{tabular}
\end{table} 
\end{corollary}
%
\subsection{Accuracy loss}
In the previous section, we mentioned that the original ABTI method \cite[section 2.2]{Buvoli2019} fails to achieve the accuracy we envisioned. In this section, we will explore the reasons behind this and provide solutions. First, we examine how ABTI affects the accuracy. If we assume that \( \mathbf{u}^{[n]} \) is computed accurately, it is evident that \( u^{n}\mathbbm{1} \) is also equally accurate through the arithmetic averaging of both sides of the scheme. Thus, \( \mathbf{B}(\alpha)\mathbf{f}^{[n]} \) is the main factor influencing the accuracy of the scheme. The following lemma will characterize the approximation property of the ABTI.
\begin{lemma} For a function \( f(t) \in \mathcal{C} ^{q} (0, T]  \), then exists approximation
\label{lem:approximation}
\begin{equation}
\frac{r}{q}\mathbbm{1}^{\mathrm{T}}\mathbf{B}(\alpha)\mathbf{f}^{[n]} - \int_{t_n}^{t_{n+1}}f(t) \mathrm{d}t = 
\mathcal{O}(\tau^{q}).
\end{equation}
\end{lemma}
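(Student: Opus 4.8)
The plan is to reduce the vector quantity to an explicit scalar sum, evaluate it in closed form by roots-of-unity orthogonality, and then compare against the Taylor expansion of the integral. First I would apply the averaging functional $\tfrac{1}{s}\mathbbm{1}^{\mathrm T}$ and unpack $\mathbf{B}(\alpha)=\mathbf{S}(\alpha)\mathbf{F}$. Since the rows of $\mathbf{F}$ act on $\mathbf{f}^{[n]}$ exactly as the discrete Fourier projections in \eqref{eq:trapezoidal_rule}, one has $(\mathbf{F}\mathbf{f}^{[n]})_{\nu}=\hat f_{\nu-1}^{[n]}$, and using $\sigma_{j,k}(\alpha)=(\alpha+\omega_j)^{k}/k$ the target becomes
\begin{equation}
\frac{r}{s}\mathbbm{1}^{\mathrm T}\mathbf{B}(\alpha)\mathbf{f}^{[n]}=\frac{r}{s}\sum_{k=1}^{q}\frac{\hat f_{k-1}^{[n]}}{k}\sum_{j=1}^{s}(\alpha+\omega_j)^{k}. \nonumber
\end{equation}
Thus the entire lemma is governed by the column sums $\sum_{j=1}^{s}(\alpha+\omega_j)^{k}$.

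The key step is to evaluate these column sums by the binomial theorem together with the orthogonality relation $\tfrac{1}{s}\sum_{j=1}^{s}\omega_j^{l}=\mathbbm{1}[\,l\equiv 0\ (\mathrm{mod}\ s)\,]$. Expanding $(\alpha+\omega_j)^{k}=\sum_{l=0}^{k}\binom{k}{l}\alpha^{k-l}\omega_j^{l}$ and summing over $j$, only those $l$ divisible by $s$ survive, and this is precisely where the dichotomy appears: for $s>q$ every admissible index obeys $0\le l\le k\le q<s$, so only $l=0$ contributes and $\sum_j(\alpha+\omega_j)^{k}=s\,\alpha^{k}$; for $s=q$ the single term $k=q$ additionally picks up $l=s=q$, giving $\sum_j(\alpha+\omega_j)^{q}=s(\alpha^{q}+1)$. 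I expect this combinatorial bookkeeping to be the heart of the argument, since it isolates the one spurious summand responsible for the order loss.

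Next I would convert the discrete Fourier coefficients back to derivatives. Expanding $f$ on the sampling circle and using the same orthogonality gives the aliasing identity $\hat f_{\nu-1}^{[n]}=\sum_{m\equiv \nu-1\,(\mathrm{mod}\ s)} r^{m}\,f^{(m)}(t_n)/m!$, whose leading term is $r^{\nu-1}f^{(\nu-1)}(t_n)/(\nu-1)!$ and whose first correction is $\mathcal{O}(r^{\nu-1+s})$. Substituting the closed form for the column sums and using $\tau=\alpha r$ so that $\alpha^{k}r^{k}=\tau^{k}$, the principal part collapses to
\begin{equation}
\sum_{k=1}^{q}\frac{\tau^{k}}{k!}\,f^{(k-1)}(t_n), \nonumber
\end{equation}
which is exactly the order-$q$ truncation of the Taylor series $\int_{t_n}^{t_{n+1}}f=\sum_{k\ge 1}\tau^{k}f^{(k-1)}(t_n)/k!$; hence the truncation error is $\mathcal{O}(\tau^{q+1})$. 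Because $r\propto\tau$ (equivalently $\alpha$ fixed), every aliasing correction scales like $\mathcal{O}(\tau^{k+s})=\mathcal{O}(\tau^{q+1})$ or smaller and is harmless. For $s>q$ this already yields the stated $\mathcal{O}(\tau^{q+1})$; for $s=q$ the extra column-sum term contributes the single additional summand $\tfrac{r}{q}\hat f_{q-1}^{[n]}=\tfrac{r^{q}}{q!}f^{(q-1)}(t_n)+\text{h.o.t.}=\mathcal{O}(\tau^{q})$, which dominates and produces the one-order loss.

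The main obstacle, and the point needing the most care, is the sampling at the complex nodes $t_n+r\omega_j$ under the bare hypothesis $f\in\mathcal{C}^{q}$: strictly speaking the aliasing expansion presupposes an analytic continuation of $f$ into a complex neighborhood, which is exactly what the Cauchy–Kowalevski analyticity of the ODE solution supplies in the intended application, so I would invoke that to legitimize the circle expansion and to replace the infinite aliasing tail by a Taylor remainder of the correct order. The remaining work—verifying that the aliasing remainder is uniformly $\mathcal{O}(\tau^{q+1})$ once $r\propto\tau$, and checking that no index $l$ other than those identified enters the column sums—is routine once the orthogonality computation is in place.
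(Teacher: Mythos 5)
Your proposal is correct and follows essentially the same route as the paper: both hinge on evaluating $\tfrac{1}{s}\sum_{j}(\alpha+\omega_j)^{k}$ by the binomial theorem and roots-of-unity orthogonality to isolate the spurious $\delta_q$ summand at $k=q$ when $s=q$, then expanding $\hat f_{\nu}^{[n]}$ in terms of $f^{(\nu)}(t_n)$ and matching against the Taylor expansion of $\int_{t_n}^{t_{n+1}}f$, arriving at the same leading defect $\delta_q\,f^{(q-1)}(t_n)\,\tau^{q}/(\alpha^{q}q!)$. The only cosmetic difference is that the paper truncates with an integral-form Taylor remainder rather than invoking the full aliasing series, which is precisely the fix you yourself flag for the $\mathcal{C}^{q}$-versus-analyticity issue.
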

\begin{proof}
By invoking the binomial theorem and leveraging the orthogonality of the Fourier basis functions (as given in \cite[Lemma 2.1]{Shen2011}), we obtain the following relation:
\begin{align*}
& \frac{1}{q}\sum_{j=1}^{q}\frac{(\alpha + \omega_j)^q}{q} = \frac{1}{q}\sum_{j=1}^{q}\frac{\sum_{m=0}^{q}\binom{q}{m}\alpha^{q-m}\omega_j^{m}}{q} \\
= &  \frac{1}{q}\sum_{m=0}^{q}\binom{q}{m}\alpha^{q-m}\left(\frac{1}{q}\sum_{j=0}^{q}\omega_j^m\right) =
 \frac{\alpha^{q}}{q} + \frac{1}{q}.  
\end{align*}
We observe that during the initial construction of the scheme, the term \(\frac{1}{q}\) vanishes whenever the number of sampling points \(s\) on the unit disk exceeds the number of Taylor expansion terms \(q\). The presence of this term in the case where \(s = q\) can therefore be identified as the direct cause limiting the ABTI method to sub-optimal accuracy.
Thence, 
\begin{align*}
\frac{r}{q}\mathbbm{1}^{\mathrm{T}}\mathbf{B}(\alpha)\mathbf{f}^{[n]} = & \sum_{\nu=0}^{q-1}\frac{\alpha^{\nu+1}}{\nu+1}  +  \frac{r}{q}\cdot\hat{f}_{q-1} = \int_0^\tau \sum_{\nu=0}^{q-1} \left(\frac{t}{r}\right)^{\nu}\cdot \hat{f}_{\nu} \mathrm{d}t +  \frac{r}{q}\cdot\hat{f}_{q-1} 
\end{align*}
where \( \hat{f}_{\nu} = \left \langle \mathbf{F}\mathbf{f}^{[n]}  \right \rangle _{\nu+1} \).
\par 
By Taylor expansion and its remainder term of an integral, we know that 
\begin{align*}
\hat{f}_{\nu} =  & \frac{1}{q}\sum_{j=1}^{q}\omega_j^{-\nu}f(t_n + r \omega_j) \\
= &  \frac{1}{q}\sum_{j=1}^{q}\omega_j^{-\nu}\left(\sum_{\mu=0}^{q-1}\frac{f^{(\mu)}(t_n)}{\mu!}(r \omega_j)^{\mu} + 	\int_{t_{n}}^{t_{j}^{[n]}}\frac{(t-t_j^{[n]})^{q-1}}{(q-1)!}\cdot f^{(q)}(t)\mathrm{d}t \right) \\
 =  & \sum_{\mu=0}^{q-1}\frac{f^{(\mu)}(t_n)}{\mu!}r^{\mu} \left(\frac{1}{q}\sum_{j=1}^{q}  \omega_j ^{\mu - \nu}\right) + \frac{1}{q}\sum_{j=1}^{q}\omega_j^{-\nu}\left(\int_{t_{n}}^{t_{j}^{[n]}}\frac{(t-t_j^{[n]})^{q-1}}{(q-1)!}\cdot f^{(q)}(t)\mathrm{d}t \right) \\
 =  & \frac{f^{(\nu)}(t_n)}{\nu!}r^{\nu}  + \frac{1}{q}\sum_{j=1}^{q}\omega_j^{-\nu}\int_{t_{n}}^{t_{j}^{[n]}}\frac{(t-t_j^{[n]})^{q-1}}{(q-1)!}\cdot f^{(q)}(t)\mathrm{d}t, \text{ for all \( 0 \le \nu \le q-1 \)}.
\end{align*}
Thence,
\begin{align*}
 &  \frac{r}{q}\mathbbm{1}^{\mathrm{T}}\mathbf{B}(\alpha)\mathbf{f}^{[n]} = \int_{0}^{\tau}\left[\sum_{\nu=1}^{q-1}\frac{f^{(\nu)(t_{n})}}{\nu!}t^{\nu} + \mathfrak{r}_{n,1}(t)\right] \mathrm{d}t \\
 & + \frac{r}{q}\cdot\left(\frac{f^{(q-1)}(t_{n})}{(q-1)!}r^{q-1} + \frac{1}{q}\sum_{j=1}^{q}\omega_{j}\int_{0}^{r}\frac{(r-t)^{q-1}}{(q-1)!}\cdot f^{(q)}(\omega_{j}t+t_{n})\mathrm{d}t\right)
\end{align*}
where
\begin{equation}
\mathfrak{r}_{n,1} := \sum_{\nu=0}^{q-1}\left(\frac{t}{r}\right)^{\nu}\cdot \frac{1}{q}\sum_{j=1}^{q}\omega_{j}^{q-\nu}\int_{0}^{r}\frac{(r-t)^{q-1}}{(q-1)!}\cdot f^{(q)}(\omega_{j}t + t_{n})\mathrm{d}t. \nonumber
\end{equation}
By similar way, 
\begin{align*}
\int_{t_{n}}^{t_{n+1}}f(t)\mathrm{d}t = \int_{0}^{\tau}f(t+t_{n})\mathrm{d}t = \int_{0}^{\tau}\left(
\sum_{\nu=0}^{q-1}\frac{f^{\nu}(t_{n})}{\nu!}t^{\nu} + \mathfrak{r}_{n,2}(t) \right)\mathrm{d}t
\end{align*}
where
\begin{equation}
\mathfrak{r}_{n,2}(t) := \int_{0}^{t}\frac{(t-\sigma)^{q-1}}{(q-1)!}f^{(q)}(\sigma + t_{n})\mathrm{d}\sigma. \nonumber 
\end{equation}
Therefore, we know 
\begin{align*}
& \frac{r}{q}\mathbbm{1}^{\mathrm{T}}\mathbf{B}(\alpha)\mathbf{f}^{[n]} - \int_{t_n}^{t_{n+1}}f(t)\mathrm{d}t \\
= & \int_{0}^{\tau}\mathfrak{r}_{n,1}(t) - \mathfrak{r}_{n,2}(t) \mathrm{d}t + \left(  \frac{f^{(q-1)}(t_{n})}{q!}r^{q} +  \frac{r}{q}\cdot \frac{1}{q}\sum_{j=1}^{q}\omega_{j}\int_{0}^{r}\frac{(r-t)^{q-1}}{(q-1)!}\cdot f^{(q)}(\omega_{j}t+t_{n})\mathrm{d}t\right) \\
= &  \frac{f^{(q-1)}(t_{n})}{\alpha^{q} \cdot q!}\tau^{q} + \mathcal{O}(\tau^{q+1}).
\end{align*}
By this way, we attain a reliable explanation for order-lose phenomenon which completes the proof.
\end{proof}
\begin{remark}
The above lemma tells us that  \( \tfrac{1}{q} \ne 0 \) allows the lower-order terms in the truncation error, specifically \( \frac{f^{(q-1)}(t_n)}{\alpha^q \cdot q!} \tau^q \), to be retained. To achieve the ideal approximation accuracy, we only need to choose the number of sampling points \( s \) to be greater than the number of terms \( q \) in the Taylor expansion. However, increasing the number of sampling points too much will increase the dimensions of the matrices \( \mathbf{A} \) and \( \mathbf{B}(\alpha) \), thus raising the computational cost. Therefore, the optimal accuracy recovery strategy is to set \( s = q + 1 \). Indeed, in this case, the number of unknowns in the solution vector \(\mathbf{u}^{[n]}\) also increases to \(q+1\), which incurs the same computational cost as directly increasing the value of \(q\) in the scenario where \(s = q\). 
\end{remark}
\section{Applications}
\label{sec:Applications}
In this section, we will give applications in  partial differential equations (PDEs).  We consider the linear parabolic equation and its $L^2$-stability fully discrete scheme
\begin{equation}\label{eq:heat_equation}
\begin{cases}
\frac{\mathrm{d}}{\mathrm{d}t} u(\mathbf{x}, t) = \mathcal{L} u(\mathbf{x}, t) + f(t, \mathbf{x}), & \mathbf{x} \in \Omega \times [0,T],  \\
u(\mathbf{x}, 0) = u_0(\mathbf{x}), & \mathbf{x} \in \Omega, \\
u(\mathbf{x}, t) = 0, & \partial \Omega \times [0, T],
\end{cases}
\end{equation}
where \( \mathcal{L} \) is unbounded self-adjoint linear operator in Hilbert space.
\par
It is worth mentioning that when using ABTI for time discretization of the parabolic equation, there exists an issue with the order of space-time discretization. Since the Laplace operator is an unbounded self-adjoint negative semi-definite operator, its spectrum lies on the negative real axis and extends toward negative infinity, we need to first perform spatial discretization of the original parabolic equation, aside from time discretization schemes that are \( A \)-stable or \( A(\theta) \)-stable. Essentially, the discretization of the Laplace operator is equivalent to truncating an unbounded operator into a bounded one. As ABTI is a conditionally stable explicit scheme, spatial discretization must be performed first, and the spatial semi-discretization can be described as a stiff ODE system. The time step constraint, dependent on the CFL condition, is closely related to the spectral radius of the discrete matrix of the Laplace operator. Therefore, the spatial semi-discretized equation can also be generalized as an abstract ODE problem of the form \eqref{eq:ODE}.
\par
The fully discrete scheme of the equation \eqref{eq:heat_equation} gives 
\begin{subequations}\label{eq:fully_discrete}
\begin{align}
\left(\mathbf{E}_{ \tau }\otimes\mathbf{M}\right) \mathbf{u}^{[0]}_{h} =  & \mathbbm{1}\otimes \mathbf{u}_h^{0} + r\left(\mathbf{B}(\alpha)\otimes\mathbf{K}\right) \mathbf{u}^{[0]}_{h} + r \left(\mathbf{B}(\alpha) \otimes \mathbf{E}_{h}\right) \mathbf{f}_{h}^{[0]}, \\ 
\left(\mathbf{E}_{ \tau }\otimes\mathbf{M}\right) \mathbf{u}^{[n+1]}_{h} =  & \left(\mathbf{A}\otimes\mathbf{M}\right)\mathbf{u}_{h}^{[n]} + r \left(\mathbf{B}(\alpha) \otimes \mathbf{K}\right) \mathbf{u}_{h}^{[n]} 
+ r \left(\mathbf{B}(\alpha) \otimes \mathbf{E}_{h}\right) \mathbf{f}_{h}^{[n]},
\end{align}
\end{subequations}
where \( \mathbf{M}\) and \( \mathbf{K} \) are mass matrix and stiffness matrix respectively  associated with finite element methods (FEMs) or another spatial discretization methods.  \( \mathbf{u}_{h}^{[n]}:= [\mathbf{u}_{1,h}^{n}, \mathbf{u}_{2,h}^{n}, \ldots, \mathbf{u}_{q,h}^{n}]^{\mathrm{T}} \) is a \( (q \times \mathrm{N}_h) \)-dimensional column solution vector whose \( N_s \) stand for the spatially discrete degree of freedom. $\mathbf{u}_h^{0} $ is the matrix form of finite element interpolation for the initial value function $u_{0}(\mathbf{x})$, that is the initial finite element solution \( u_h^0 := \sum_{\mu = 1}^{N_h} \langle \mathbf{u}_h^0 \rangle _{\mu} \varphi_{\mu}(\mathbf{x}) \) where \( \langle \mathbf{u}_h^0 \rangle _{\mu}  \) is the \( \mu \)-th component of vector \( \mathbf{u}_{h}^{0} \) and \( \{\varphi_{\mu}(\mathbf{x})\}_{\mu=1}^{N_{h}} \) is the finite element base function. The numerical solution \( u_{h}^{n} \) at the \( n \)-th time level has definitely relation 
\begin{equation}
u_{h}^{n} = \bm{\varphi}^{\mathrm{T}} \mathbf{u}_{h}^{n} = \bm{\varphi}^{\mathrm{T}} \left(\tfrac{1}{q}\mathbbm{1}_{\tau}^{\mathrm{T}}\otimes \mathbf{E}_{h}\right)\mathbf{u}_{h}^{[n]} \nonumber
\end{equation}
where \( \bm{\varphi} =[\ldots, \varphi_j, \ldots]^{\mathrm{T}} \) is   the FEM base function vector, \( \mathbbm{1}_{\tau} \) is the all-one vector associated with temporal discrete scale \( q \) and \( \mathbf{E}_{h} \) is the unite matrix associated with spatial discrete scale or so-called DoF( Degree of Freedom) of FEM \( N_{h} \).  
\subsection{\( L^2 \)-stability}
\begin{theorem}[ \( L^{2} \)-stability]
\label{thm:l2_stability}
For arbitrary \( q \ge 1 \), under the time scale restriction with respect to the parabolic CFL condition \( \rho(\mathbf{G}) < 1 \) where \( \mathbf{G}:= \mathbf{A}\otimes\mathbf{E}_h  + \tau (\mathbf{B}\otimes\mathbf{K})(\mathbf{E}_\tau\otimes\mathbf{M})^{-1} \) with a brief note \( \mathbf{B}:=\tfrac{1}{\alpha}\mathbf{B}(\alpha) \), then the fully discrete scheme \eqref{eq:fully_discrete} is \( L^2 \) stable in the sense that   
\begin{equation}
\left\| u_h^{n+1} \right\| \le \left\| u_h^0 \right\| + \tau \sum_{\nu=0}^{n}\left\| f_{h}^{\nu} \right\|. 
\end{equation}

\end{theorem}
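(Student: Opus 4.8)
The plan is to decouple the fully discrete Kronecker recursion into scalar-in-space modal problems, reinterpret the hypothesis $\rho(\mathbf{G}) < 1$ through the eigenvalues of the per-mode stability matrix of Definition \ref{def:absolute_stability_region}, and then promote that spectral-radius bound to a telescopable norm estimate. First I would use that $\mathbf{M}$ is symmetric positive definite and $\mathbf{K}$ symmetric negative semidefinite, so the generalized eigenproblem $\mathbf{K}\phi_k = \mu_k \mathbf{M}\phi_k$ has real eigenvalues $\mu_k \le 0$ and an $\mathbf{M}$-orthonormal eigenbasis $\{\phi_k\}$. Expanding the spatial part of $\mathbf{u}_h^{[n]}$ in this basis turns the discrete $L^2$ norm $\|\cdot\| = \|\cdot\|_{\mathbf{M}}$ into the Euclidean norm of the modal coefficients by Parseval, while the Kronecker form $\mathbf{E}_\tau \otimes (\cdot)$ leaves the temporal blocks $\mathbf{A}, \mathbf{B}$ untouched and decouples the scheme into $N_h$ independent $s$-dimensional recursions $\hat{\mathbf{u}}_k^{[n+1]} = \mathbf{R}(z_k)\hat{\mathbf{u}}_k^{[n]} + \tau \hat{\mathbf{H}} \hat{\mathbf{f}}_k^{[n]}$, where $z_k = \tau \mu_k \le 0$ in the scaling of Definition \ref{def:absolute_stability_region} and $\mathbf{R}(z_k) = \mathbf{A} + z_k \mathbf{B}$.

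Second, I would render the CFL hypothesis explicit via the tensor eigenvalue formula. Since the spatial factor of $\mathbf{G}$ is diagonalized by the $\phi_k$, the subspaces $\mathbb{C}^s \otimes \phi_k$ are $\mathbf{G}$-invariant and $\mathbf{G}$ acts on each as $\mathbf{R}(z_k)$, so $\operatorname{spec}(\mathbf{G}) = \bigcup_k \operatorname{spec}(\mathbf{R}(z_k))$ and $\rho(\mathbf{G}) < 1$ is equivalent to $z_k \in \mathcal{S}_q$ for every mode. As the $z_k$ are real and nonpositive, this is precisely the requirement $\tau\,\rho(\mathbf{M}^{-1}\mathbf{K}) < r_q$ with $r_q$ the parabolic radius of Corollary \ref{coro:the smallest modulus zero of the polynomial}; this exhibits the time-step restriction concretely and connects the PDE statement to the ODE analysis of Section \ref{sec:The Main Results}.

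Third, I would convert $\rho(\mathbf{R}(z_k)) < 1$ into a one-step contraction and close by discrete variation of constants. Unrolling gives $\hat{\mathbf{u}}_k^{[n+1]} = \mathbf{R}(z_k)^{\,n+1}\hat{\mathbf{u}}_k^{[0]} + \tau\sum_{\nu=0}^{n}\mathbf{R}(z_k)^{\,n-\nu}\hat{\mathbf{H}}\hat{\mathbf{f}}_k^{[\nu]}$; reconstructing with $u_h^{n} = \tfrac{1}{s}(\mathbbm{1}^{\mathrm{T}} \otimes \mathbf{E}_h)\mathbf{u}_h^{[n]}$ and exploiting $\tfrac{1}{s}\mathbbm{1}^{\mathrm{T}}\mathbf{A} = \tfrac{1}{s}\mathbbm{1}^{\mathrm{T}}$ (immediate from $\mathbf{A} = \tfrac{1}{s}\mathbbm{1}\mathbbm{1}^{\mathrm{T}}$) together with the quadrature-exactness of $\tfrac{r}{s}\mathbbm{1}^{\mathrm{T}}\mathbf{B}(\alpha)$ from Lemma \ref{lem:approximation} to bound the forcing by $\tau\|f_h^\nu\|$, then recombining modes through Parseval, should deliver $\|u_h^{n+1}\| \le \|u_h^0\| + \tau\sum_{\nu=0}^{n}\|f_h^\nu\|$, with the iterator initialization ($\alpha = 0$) absorbed into the $\nu = 0$ term.

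The main obstacle will be the promotion in the third step: passing from $\rho(\mathbf{R}(z_k)) < 1$ to a genuine norm estimate with the clean constant $1$, uniformly in $k$. Because $\mathbf{R}(z_k) = \mathbf{A} + z_k \mathbf{B}$ is non-normal, the spectral-radius bound does not give $\|\mathbf{R}(z_k)\|_2 \le 1$, and a generic Lyapunov argument only yields a mode-dependent norm with an a priori uncontrolled constant. I expect the resolution to rest on the particular algebra of the scheme---the rank-one averaging projection $\mathbf{A} = \tfrac{1}{s}\mathbbm{1}\mathbbm{1}^{\mathrm{T}}$ and the Fourier--Vandermonde factorization $\mathbf{B} = \mathbf{S}(\alpha)\mathbf{F}/\alpha$---to exhibit an inner product in which $\mathbf{R}(z_k)$ is simultaneously non-expansive for all admissible real $z_k$, rather than on generic matrix theory.
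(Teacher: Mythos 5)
Your skeleton --- unroll the recursion driven by $\mathbf{G}$, invoke the spectral-radius hypothesis to control the powers $\mathbf{G}^{n-\nu}$, and sum the forcing contributions --- is the same as the paper's, and your modal decomposition via the generalized eigenproblem $\mathbf{K}\phi_k=\mu_k\mathbf{M}\phi_k$ is a cleaner way of making explicit the tensor identity $\rho(\mathbf{G})=\max_k\rho\left(\mathbf{A}+z_k\mathbf{B}\right)$ that the paper uses only implicitly here (it states it explicitly only in the numerical section). The genuine gap is exactly the one you name in your last paragraph, and your proposal does not close it: the passage from $\rho(\mathbf{R}(z_k))<1$ to $\lVert\mathbf{R}(z_k)^m\rVert\le 1$ --- in the $\mathbf{M}$-induced $L^2$ norm, with constant exactly $1$, uniformly in $m$ and in $k$ --- is not available from generic matrix theory. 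Since $\mathbf{R}(z)=\mathbf{A}+z\mathbf{B}$ is non-normal, a Lyapunov or Kreiss-type argument yields only $\lVert\mathbf{R}(z_k)^m\rVert\le C(z_k)$ with $C(z_k)$ degrading as $z_k$ approaches the boundary of $\mathcal{S}_q$, which is precisely the regime $\tau\,\rho(\mathbf{M}^{-1}\mathbf{K})\to r_q$ the theorem is meant to cover; and as $h\to 0$ the modes $z_k$ fill out the interval, so no uniform constant, let alone the constant $1$, follows. The $z$-independent inner product in which $\mathbf{R}(z)$ is simultaneously non-expansive for all admissible real $z$ is nowhere constructed.

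You should know that the paper's own proof commits the identical leap: after unrolling $\mathbf{v}_h^{[n+1]}=\mathbf{G}^{n+1}\mathbf{v}_h^{0}+r\sum_{j}\mathbf{G}^{n-j}\tilde{\mathbf{f}}_h^{j}$ it asserts that the condition $\rho(\mathbf{G})\le 1$ ``yields'' $\lVert\tilde{u}_h^{n+1,(n-\nu)}\rVert\le\lVert u_h^{n+1}\rVert$, i.e.\ it passes from $\rho(\mathbf{G})\le 1$ to $\lVert\mathbf{G}\mathbf{v}\rVert\le\lVert\mathbf{v}\rVert$ with no further argument. So you have not overlooked a device that the paper supplies; you have located the soft spot of the theorem itself. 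To genuinely close it one would need either a structural proof that $\mathbf{R}(z)$ is a contraction in a fixed inner product for all $z\in(-r_q,0]$ (plausible, since $\mathbf{A}$ is the orthogonal projection onto $\operatorname{span}\{\mathbbm{1}\}$, but it must be established), or a weakened conclusion $\lVert u_h^{n+1}\rVert\le C\left(\lVert u_h^0\rVert+\tau\sum_{\nu}\lVert f_h^{\nu}\rVert\right)$ under a strict margin $\rho(\mathbf{G})\le 1-\delta$. One further small point: your step reinterpreting $\rho(\mathbf{G})<1$ as $\tau\,\rho(\mathbf{M}^{-1}\mathbf{K})<r_q$ tacitly assumes that the negative real section of $\mathcal{S}_q$ is the full interval $(-r_q,0)$; that is what Corollary \ref{coro:the smallest modulus zero of the polynomial} implicitly asserts, but it deserves to be flagged as an input rather than taken for granted.
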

\begin{proof}
We review the spatial semi-discretization
\begin{align*}
\frac{\mathrm{d}}{\mathrm{d}t} \mathbf{M}\mathbf{u}_h(t) = \mathbf{K}\mathbf{u}_h(t) + \mathbf{f}_{h}(t),
\end{align*}
where \( \mathbf{M} := \int_{\Omega}\bm{\varphi}\cdot \bm{\varphi}^{\mathrm{T}}\mathrm{d}x,  \mathbf{K} := \int_{\Omega} \bm{\varphi}\cdot \Delta\bm{\varphi}^{\mathrm{T}}\mathrm{d}x = - \int_{\Omega} \nabla\bm{\varphi}\cdot (\nabla\bm{\varphi})^{\mathrm{T}}\mathrm{d}x, \mathbf{f}_h(t) := \int_{\Omega}\bm{\varphi}\cdot f \mathrm{d}x \) are mass matrix and  stiffness matrix and load vector respectively. 
\par 
The fully discretization, denote \( \mathbf{v}_h^{[n]} := (\mathbf{E}_\tau\otimes\mathbf{M})\mathbf{u}_h^{[n]} \) and \( \mathbf{G}:= \mathbf{A}\otimes\mathbf{E}_h  + r (\mathbf{B}\otimes\mathbf{K})(\mathbf{E}_\tau\otimes\mathbf{M})^{-1} \) whose the size of identity matrix \( \mathbf{E}_{ \tau } \) and \(  \mathbf{E}_{ h } \) keep up with time and space discrete matrix respectively, 
\begin{align*}
\mathbf{v}_h^{[n+1]} = \mathbf{G}\mathbf{v}_h^{[n]} + \tau \tilde{\mathbf{f}}_h^{[n]}, \text{ where } 
\tilde{\mathbf{f}}_h^{[n]} := \left(\mathbf{B} \otimes \mathbf{E}_h\right) \mathbf{f}_{h}^{[n]}. 
\end{align*}
By the recursion, the preceding relation subjects to 
\begin{equation}
\mathbf{v}_h^{[n+1]} = \mathbf{G}^{n+1}\mathbf{v}_h^{0} + \tau \sum_{\nu=0}^{n}\mathbf{G}^{n-\nu}\tilde{\mathbf{f}}_h^{\nu}. \nonumber
\end{equation}
Notice that
\begin{equation}
\frac{1}{q}(\mathbf{u}_h^{[m]})^{\mathrm{T}}\mathbf{v}_h^{[n]} = \frac{1}{q}(\mathbf{u}_h^{[m]})^{\mathrm{T}}(\mathbf{E}_\tau\otimes\mathbf{M})\mathbf{u}_h^{[n]} = \frac{1}{q}\sum_{j=1}^{q}\left \langle \mathbf{u}_{h,j}^{[m]}, \mathbf{u}_{h,j}^{[n]} \right \rangle = \left \langle u_h^{m}, u_h^{n} \right \rangle, \nonumber
\end{equation}
 we take the product \( \frac{1}{q}(\mathbf{u}_h^{[n+1]})^{\mathrm{T}} \) on the both side that 
\begin{align*}
\lVert u_h^{n+1} \rVert ^2 \le  &  \left \langle u_h^{n+1,(n+1)}, u_h^0 \right \rangle + \tau \sum_{\nu=0}^{n}\left \langle u_h^{n+1, (n-\nu)}, \tilde{f}_h^{\nu}  \right \rangle  \\
\le  &  \lVert u_h^{n+1, (n+1)} \rVert \cdot \lVert u_h^0 \rVert + \tau \sum_{\nu=0}^{n}\lVert u_h^{n+1, (n-\nu)}\rVert \cdot \lVert \tilde{f}_h^{\nu} \rVert,
\end{align*}
where the \( u_h^{n+1, (n-\nu)}  \) has similar definition to numerical solution \( u_{h}^{n} \) associated with vector \(  (\mathbf{G}^{n-\nu})^{\mathrm{T}}\mathbf{u}_{h}^{[n+1]}  \)  that
\begin{equation}
u_h^{n+1, (n-\nu)} := \bm{\varphi}^{\mathrm{T}} \left(\frac{1}{q}\mathbbm{1}_{\tau}^{\mathrm{T}}\otimes \mathbf{E}_{h}\right) \left((\mathbf{G}^{n-\nu})^{\mathrm{T}}\mathbf{u}_{h}^{[n+1]}\right), ~ \text{ for all \( -1 \le \nu \le n \)}. \nonumber
\end{equation}
The CFL condition associated with the spectral radius \( \rho( \mathbf{G} ) \le 1 \) yields 
\begin{align*}
\lVert u_{h}^{n+1,(n-\nu)} \rVert \le \lVert u_h^{n+1} \rVert \text{ for all }  -1 \le \nu \le n. 
\end{align*}
Notice that \( \lVert \tilde{f}_h^{\nu} \rVert \sim \lVert f_h^{\nu} \rVert \), we therefore obtain the upper bound estimate 
\begin{align*}
\lVert u_h^{n+1} \rVert \le  \lVert u_h^0 \rVert + \tau \sum_{\nu=0}^{n}\lVert \tilde{f}_h^\nu \rVert \le  \lVert u_h^0 \rVert + \tau \sum_{\nu=0}^{n}\lVert f_h^\nu \rVert.
\end{align*}
The proof of the theorem concludes at this point.
\end{proof}
The above \( L^2 \)-stability analysis framework can expand to block Adams-Bashforth (BAB), block Adams-Moulton (BAM) or block backward differentiation formula (BBDF) which refers to \cite{Buvoli2018} for temporal discretizations within general form as 
\begin{equation}
\mathbf{u}^{[n+1]} = \mathbf{Au}^{[n]} + \tau \mathbf{B}\left(\mathcal{L}_h\mathbf{u}^{[n]} + \mathbf{f}^{[n]} \right), \nonumber
\end{equation}
where \( \mathbf{u}^{[n]}, \mathbf{A} \) and \( \mathbf{B} \) deduced by specific strategy and \( \mathcal{L}_h \) is the discretization of \( \mathcal{L} \).  Particularly, our analysis results have two following reduced version. 
\begin{itemize}
\item 
When \( r \to 0 \) or \( \alpha \to \infty \), the discrete matrices \( \mathbf{A}\to 1, \mathbf{B} = \frac{1}{\alpha}\mathbf{B}(\alpha) \to 1 \). Meanwhile the above method method reduce to the classical AB1 method. 
\item
When \( \mathbf{M}\to 1, \mathbf{K} \to \lambda \), the above method means that \( \Delta u \to \lambda u \) that is the heat equation reduces to the Dahlquist test problem.  
At this point, the CFL condition for \( L^2 \)-stability is consistent with the stability condition derived from the stability region of the ODE problem.
\end{itemize}
\subsection{\( L^{2} \)-error}
The error bound of fully discrete scheme, combined with the spatial finite element discretization, essentially only requires the analysis of the standard spatial semi-discretization scheme, the \( L^2 \)-stability analysis results Theorem \ref{thm:l2_stability}, and the approximation of the ABTI method  Lemma \ref{lem:approximation}.
\begin{theorem}[\( L^2 \)-error]
\label{thm:p1_time_error}
Denote the error \( e^{n} := \lVert u(t_n) - u^n \rVert \) associated with varying discrete time nodal, then fully discrete scheme \eqref{eq:fully_discrete} has the error bound 
\begin{equation}
e^{N} \le C(\tau^{q-1} + h^{k}), \nonumber
\end{equation}
where \( q \) is the discretization scale used by the ABTI method, and \( k \) is the accuracy of spatial finite element discretization.
\end{theorem}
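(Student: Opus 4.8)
The plan is to adopt the classical elliptic-projection splitting of Wheeler and Thom\'ee. Let $a(\cdot,\cdot)$ be the bilinear form associated with $-\mathcal{L}$ and let $R_h u \in V_h$ denote the Ritz projection defined by $a(R_h u - u, \chi) = 0$ for all $\chi \in V_h$. I would write the nodal error as
\[
u(t_n) - u^n = \bigl(u(t_n) - R_h u(t_n)\bigr) + \bigl(R_h u(t_n) - u^n\bigr) =: \rho^n + \vartheta^n .
\]
so that $e^n \le \|\rho^n\| + \|\vartheta^n\|$. The term $\rho^n$ is purely spatial and is controlled by the standard elliptic-projection estimate $\|\rho^n\| \le C h^k$, together with the analogous bound for $\partial_t \rho$; neither involves the time integrator. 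Everything temporal is thereby concentrated in $\vartheta^n$.

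First I would derive the defect recursion for $\vartheta$. Substituting the projected exact solution $R_h u(t_j^{[n]})$ into the fully discrete scheme \eqref{eq:fully_discrete} and subtracting the identity satisfied by $u^n$, one finds that $\vartheta$ obeys exactly the same recursion as the numerical solution, driven by a consistency defect $\mathbf{d}^{[n]}$ that occupies the forcing (right-hand-side) slot. This defect splits into two parts: a temporal part measuring the discrepancy between the ABTI quadrature $\tfrac{r}{s}\mathbbm{1}^{\mathrm{T}}\mathbf{B}(\alpha)$ applied to the semidiscrete right-hand side $\mathcal{L}_h R_h u + f$ and its exact time integral, and a spatial part arising from $\langle \partial_t \rho, \chi\rangle$, i.e. the failure of $R_h$ to commute with $\partial_t$. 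The temporal part is precisely the quantity bounded in Lemma~\ref{lem:approximation}, whence its reconstructed per-step size is $\mathcal{O}(\tau^{q+1-\delta_q})$, while the spatial part contributes $\mathcal{O}(\tau h^k)$ after integration over one step.

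Next I would invoke Theorem~\ref{thm:l2_stability}. Because $\vartheta$ satisfies the error recursion with $\mathbf{d}^{[n]}$ in the role of the inhomogeneity, and because the CFL hypothesis $\rho(\mathbf{G}) < 1$ is in force, the $L^2$-stability estimate applies verbatim and propagates the accumulated defect,
\[
\|\vartheta^N\| \le \|\vartheta^0\| + \sum_{\nu=0}^{N-1} D_\nu , \qquad D_\nu = \mathcal{O}\bigl(\tau^{q+1-\delta_q} + \tau h^k\bigr),
\]
where $D_\nu$ is the reconstructed size of the per-step defect (temporal part from Lemma~\ref{lem:approximation}, spatial part of order $\tau h^k$); this has the same shape as the $\tau\sum_\nu\|f_h^\nu\|$ term appearing in Theorem~\ref{thm:l2_stability}. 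Summing the $N = T/\tau$ steps loses exactly one power of $\tau$, so that $\sum_\nu D_\nu = \mathcal{O}(\tau^{q-\delta_q} + h^k)$. The starting error $\vartheta^0$ is produced by the iterator step, whose defect is estimated in the same way; combined with $\|\rho^N\| \le Ch^k$ this yields the asserted bound $e^N \le C(\tau^{q-\delta_q}+h^k)$, and in particular exhibits the order loss $\delta_q$ governed by the choice $s=q$ versus $s>q$.

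The step I expect to be the main obstacle is the vector-level bookkeeping rather than any deep analytic difficulty. One must track how the componentwise defect $\mathbf{d}^{[n]}$ --- which lives on all $s$ complex sample nodes $t_j^{[n]}$ --- is mapped, under the arithmetic-mean reconstruction $u^n = \tfrac{1}{s}\mathbbm{1}^{\mathrm{T}}\mathbf{u}^{[n]}$, onto the scalar increment to which Lemma~\ref{lem:approximation} and the stability estimate of Theorem~\ref{thm:l2_stability} apply. In particular, one has to verify that the defect genuinely enters the forcing slot, so that it is damped by $\rho(\mathbf{G})<1$ rather than amplified, and that the mass-matrix weighting $(\mathbf{E}_\tau\otimes\mathbf{M})$ implicit in the norm used for the stability estimate is the same one under which the defect is measured. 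Once the defect is correctly identified with the right-hand side of the stability inequality, the remaining order count is routine.
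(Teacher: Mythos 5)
Your proposal is correct, and it reaches the paper's bound through the same two ingredients (the per-step quadrature defect of Lemma~\ref{lem:approximation} and stability under $\rho(\mathbf{G})<1$), but via a different intermediate quantity. You use the classical Wheeler--Thom\'ee splitting $u(t_n)-u^n=\rho^n+\vartheta^n$ with $R_h$ as pivot, so that the single recursion for $\vartheta$ carries both the temporal defect $\mathcal{O}(\tau^{q+1-\delta_q})$ and the spatial defect $\mathcal{O}(\tau h^k)$ coming from $\partial_t\rho$, and you then discharge the accumulation through Theorem~\ref{thm:l2_stability}. The paper instead pivots on the exact semi-discrete solution $u_h(t)$: it cites the standard semi-discrete estimate $\lVert u(t_n)-u_h(t_n)\rVert\le C h^k$ from Thom\'ee, and then treats $u_h^n-u_h(t_n)$ as a pure ODE-system time-discretization error, obtained by averaging the scheme with $\tfrac{1}{s}\mathbbm{1}^{\mathrm{T}}$ (which annihilates $\mathbf{A}$) and comparing with the integrated semi-discrete equation, so that only the temporal defect $T_n$ appears in the recursion and no $\partial_t\rho$ term arises. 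Your route buys a self-contained fully discrete argument that does not presuppose the semi-discrete error theory, at the price of the extra bookkeeping you flag (tracking the vector-level defect through the mass-matrix-weighted norm and verifying it sits in the forcing slot); the paper's route keeps the temporal analysis $h$-free and lets Lemma~\ref{lem:approximation} apply verbatim, but leans on the cited semi-discrete result and is, if anything, less explicit than you are about how the CFL condition controls the propagation of the defect through the $\Delta_h\mathbf{u}_h^{[n]}$ term. Both summations lose one power of $\tau$ over $N=T/\tau$ steps and yield $e^N\le C(\tau^{q-\delta_q}+h^k)$.
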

\begin{proof}
Denote 
\begin{align*}
\text{ \( L^2 \) projection } P_h & : V \to V_h \text{ such that } \left(P_h u - u, \chi \right) = 0, \\
\text{ Ritz projection } R_h & : H_0^1 \to V_h \text{ such that } A\left(R_h u - u, \chi \right) = 0,
\end{align*}
where  \( \chi \in \mathcal{V}_h \subset \mathcal{V} \).
The discrete Laplacian operator defined by \( \left(\Delta_h u_h , \chi \right) = A\left(u_h, \chi\right) \) for all \( \chi \in \mathcal{V}_h \). 
Noticing the \( L^2 \) projection acting on the original equation, we can compare with
\begin{align*}
\frac{\mathrm{d}}{\mathrm{d}t} P_h u - \Delta_h R_h u = P_h f, \text{ since \(  P_h \Delta = \Delta_h R_h \)} 
\end{align*}
with the spatial semi-discretization
\begin{equation}\label{eq:spatial semi-discrete}
\frac{\mathrm{d}}{\mathrm{d}t} u_h(t) -\Delta_h u_h(t) = P_h f.
\end{equation}
and obtain  the error equation of spatial semi-discretization that 
\begin{equation}\label{eq:spatial semi-discrete}
\frac{\mathrm{d}}{\mathrm{d}t} e_h(t) - \Delta_h e_h(t) = - \frac{\mathrm{d}}{\mathrm{d}t} (P_h u - u)  +  \Delta_h (R_h u - u) , \text{ with  initial value } e_h(0) = 0,
\end{equation} 
where \( e_h(t) := u(t) - u_h(t) \). 
\par 
By standard analysis techniques \cite[Theorem 1.2]{Thomee2006}, the error bound of the spatial semi-discretization \eqref{eq:spatial semi-discrete} yields 
\begin{equation}
\lVert e_h(t) \rVert = \lVert u_h(t_n) - u(t_n) \rVert  \le C _{h} h^{k} . \nonumber
\end{equation}
Next, we consider the temporal discretization  about \eqref{eq:spatial semi-discrete} as
\begin{align*}
\mathbf{u}_h^{[n+1]} = \mathbf{A}\mathbf{u}_h^{[n]} + r \mathbf{B}(\alpha)\left(\Delta_h \mathbf{u}_h^{[n]} + \mathbf{f}_h^{[n]} \right),
\end{align*}
where \( \mathbf{f}_h^{[n]} := P_h \mathbf{f}^{[n]}  \). Take \( \frac{1}{q}\mathbbm{1}^{\mathrm{T}} \) on the both side,
we know
\begin{align*}
u_h^{n+1} = u_h^{n} + \frac{r}{q}\mathbbm{1}^{\mathrm{T}} \mathbf{B}(\alpha)\left(\Delta_h \mathbf{u}_h^{[n]} + \mathbf{f}_h^{[n]} \right).
\end{align*}
Take \( \int_{t_n}^{t_{n+1}}\cdot ~ \mathrm{d}t  \) for \( \frac{\mathrm{d}}{\mathrm{d}t} u_h(t) = \Delta_h u_h(t) + f_h \) with notation \(
f_h := P_h f \), we notice
\begin{align*}
u_h(t_{n+1}) = u_h(t_{n}) + \int_{t_n}^{t_{n+1}}  \left(\Delta_h u_h(t) + f_h \right) \mathrm{d}s.
\end{align*}
Thence,  we know
\begin{align*}
e_{h, \tau}^{n+1} = e_{h, \tau}^{n} +  \frac{r}{q}\mathbbm{1}^{\mathrm{T}} \mathbf{B}(\alpha)\left(\Delta_h \mathbf{u}_h^{[n]} + \mathbf{f}_h^{[n]} \right) - \int_{t_n}^{t_{n+1}}  \left(\Delta_h u_h(t) + f_h \right) \mathrm{d}s =: e_{h, \tau}^{n} + T_n,
\end{align*}
where	\( e_{h, \tau}^{n}:= u_h^n - u_h(t_n) \).  The approximation Lemma \ref{lem:approximation} tell us that \( e_{h, \tau}^{n+1} \le e_{h, \tau}^{n} + \mathcal{O}(\tau^{q-1}) \)
then \( e_{h, \tau}^{n+1} \le e_{h, \tau}^{0} + \sum_{j=0}^{n} T_n \).
\par 
Therefore, we know
\begin{align*}
\lVert u_h^n - u(t_n) \rVert \le  & \lVert u_h^n - u_h(t_n) \rVert + \lVert u_h(t_n) - u(t_n) \rVert  \\
\le  & \left( \lVert e_{h, \tau}^{0}  \rVert  + \sum_{j=0}^{n} \lVert T_n  \rVert \right) +  \lVert u_h(t_n) - u(t_n) \rVert  \\
\le  & C(\tau^{q-1} + h^{k}),
\end{align*}
where \( C = \max \left\{C _{ \tau }, C _{ h } \right\} \).
\end{proof}

\section{Numerical Verifications}
\label{sec:Numerical Verifications}
This section will present several typical numerical examples to validate the analysis results mentioned earlier.
\subsection{ODE}
To avoid the influence of spatial errors, we first consider a stiff nonlinear ODE problem to testing the convergence of ABTI. In general, such problems do not have an analytical solution. However, the Allen-Cahn ODE problem falls into this category and has an exact solution \cite{Stuart1998}
\begin{equation}
u(t) = \frac{u^0}{ \sqrt{e^{- \frac{2}{ \varepsilon^2}t} + u_0^2(1-e^{- \frac{2}{ \varepsilon^2 }t})}}. \nonumber
\end{equation}
The equation is expressed as follows
\begin{equation}
\frac{\mathrm{d}}{\mathrm{d}t} u + \frac{1}{ \varepsilon^2 }f(u) = 0,   t \in [0,T] \text{ with initial data } u(0) = u^0, \nonumber
\end{equation}
where \( f(u) = u^3 - u \) equals to the first derivative of double-well potential function \( F(u) = \tfrac{1}{4}(u^2- 1)^2 \) associated with \( u \). 
\par
Theo. c.o. is the abbreviation for theoretical convergence order. \( \tau \) represents the length of discrete time step, and \( \mathcal{E}(\tau) \) denotes error at the terminal time $T = 1$ under this time discretization. In the entire comparative experiment, we fix the small parameter in the equation as \( \varepsilon = 0.5 \), which reflects the width of the interface and the initial data $u^{0} = 0.01$. 
For the parameter \( \alpha \) in the scheme, since different values of \( \alpha \) do not affect the step size constraint, we without loss of generality choose \( \alpha = 1 \). Since we are dealing with a nonlinear problem, the initial solution vector requires solving the following nonlinear algebraic equation
\begin{equation}
\mathbf{u}^{[0]} = u^{0}\mathbbm{1} - \frac{r}{\epsilon^{2}} \mathbf{B}(\alpha)f(\mathbf{u}^{[0]}). \nonumber
\end{equation}
In our experiment, we use the Newton's iteration for numerically solving the above equation, with the initial guess for the iteration set as \( u^{0}\mathbbm{1} \).
\par 
The  Table \ref{tab:AC_ODE_qs} and \ref{tab:AC_ODE_qs1} compare the errors and convergence for the cases \( s = q \) and \( s = q+1 \). From the experimental results, we achieved the optimal convergence of the ABTI method with lower computational cost. Due to the limitations of computer machine precision, it is recommended to use the high-precision toolbox\footnote{\url{https://www.advanpix.com/}} in MATLAB for schemes of order four and above.
\begin{table}[h]
\caption{The error and convergence of the original ABTI $s=q$ in solving the Allen-Cahn ODE.}
\label{tab:AC_ODE_qs}
\begin{tabular}{ccccccc}
\toprule
\multirow{2}{*}{$1/\tau$} &
\multicolumn{2}{c}{$q=1$} &
\multicolumn{2}{c}{$q=2$} &
\multicolumn{2}{c}{$q=3$} \\
\cline{2-7} 
& {$\mathcal{E}(\tau)$} & {c.o.} & {$\mathcal{E}(\tau)$} & {c.o.} & {$\mathcal{E}(\tau)$} & {c.o.} \\
\midrule
  128  &  2.299e-02 & - &  2.011e-02 & - &  1.548e-04 & - \\ 
  256  &  2.669e-02 & -0.215 &  1.024e-02 & 0.974 &  3.941e-05 & 1.974 \\ 
  512  &  2.861e-02 & -0.100 &  5.162e-03 & 0.988 &  9.939e-06 & 1.987 \\ 
 1024  &  2.958e-02 & -0.048 &  2.592e-03 & 0.994 &  2.496e-06 & 1.994 \\ 
Theo. c.o. & - &  0.000 & - & 1.000  &  - & 2.000 \\
\bottomrule
\end{tabular}
\end{table}
\begin{table}[h]
\caption{The error and convergence of the modified ABTI $s=q+1$ in solving the Allen-Cahn ODE.}
\label{tab:AC_ODE_qs1}
\begin{tabular}{ccccccc}
\toprule
\multirow{2}{*}{$N _{\tau}$} &
\multicolumn{2}{c}{$q=1$} &
\multicolumn{2}{c}{$q=2$} &
\multicolumn{2}{c}{$q=3$} \\
\cline{2-7} 
& {$\mathcal{E}(N _{\tau})$} & {c.o.} & {$\mathcal{E}(N _{\tau})$} & {c.o.} & {$\mathcal{E}(N _{\tau})$} & {c.o.} \\
\midrule
  128  &  2.054e-02 & - &  1.687e-04 & - &  4.885e-07 & - \\ 
  256  &  1.034e-02 & 0.990 &  4.115e-05 & 2.035 &  5.287e-08 & 3.208 \\ 
  512  &  5.188e-03 & 0.995 &  1.016e-05 & 2.018 &  6.113e-09 & 3.112 \\ 
 1024  &  2.598e-03 & 0.998 &  2.523e-06 & 2.009 &  7.337e-10 & 3.059 \\ 
Theo. c.o. & - &  1.00 & - & 2.00  &  - & 3.00 \\
\bottomrule
\end{tabular}
\end{table}
\subsection{PDE}
Consider classical heat equation 
\begin{equation}
\begin{cases}
\frac{\mathrm{d}}{\mathrm{d}t} u = \Delta u, & u \in [0, T] \times [-\pi, \pi], \\
u(0, x) = \cos(x), & x \in [ - \pi, \pi] \text{ and periodical boundary condition}.
\end{cases}
\end{equation}
 As we well known that its exact solution \( u = e^{-t}\cos(x) \) and  obviously \( u \in H_0^{1}([-\pi, \pi]) \).
\par 
In high-dimensional complex regions, the mass and stiffness matrices obtained from high-order finite element methods are relatively complicated. Therefore, to better understand the optimality of the stability conditions derived from the amplification matrix, we first examine the content in the case of a one-dimensional homogeneous boundary condition. Considering the relation between FEM with the linear elements approximation on \( 1 \)-dimensional case and  the center difference methods, the stiffness matrix  for the discretization of Laplacian operator can represented by 
\begin{align*}
\mathbf{K} = 
\frac{1}{h^2}
\begin{bmatrix}
-2 & 1 & & \\
1 & \ddots & \ddots & \\
 & \ddots & \ddots & 1 \\
& & 1 & -2
\end{bmatrix}
\end{align*}
and the mass matrix $\mathbf{M} = \mathbf{E}_h $ is a identity matrix. 
By simple evaluation, we claim \( \mathbf{K} \) is a negative defined matrix and the spectral radius  \( \rho(\mathbf{K}) \le \tfrac{4}{h^2} \). Meanwhile, a key tensor identity will be used, namely  \( \rho(\mathbf{G}) =  \rho(\mathbf{A}\otimes \mathbf{E}_{h} + \tau \mathbf{B}\otimes \mathbf{K}) = \rho(\mathbf{A} - \tau \rho(\mathbf{K}) \mathbf{B}) \).
It is easy to check that \( \rho(\mathbf{G}) < 1 \) whenever \( \tfrac{\tau}{h^2} \le \tfrac{r_n}{4} \). 
Thus, for a given target time approximation accuracy, the determination of the parabolic CFL condition only requires the discriminant \eqref{eq:discriminant}  to determine the parabolic radius \( r_n \).
\par
For example, to illustrate with a fixed first-order time approximation accuracy, we have \( r_1 = 0.6 \). At this point, we fix the spatial step size \( h = \tfrac{\pi}{32} \) and choose \( \tau = \frac{r_1 \times h^2}{4} \) and \( \tau = \frac{r_1 \times h^2}{4} (1\pm 0.1) \), and compare their numerical solutions to test the stability of the scheme. Set the parameter of ABTI \( \alpha = 1 \). 
The table below shows the spectral radius of the amplification matrix under different conditions
and Figure \ref{fig:blowup} shows the corresponding numerical solution. It is easy to see that when the time step size is outside the range constrained by the CFL condition, the numerical solution will exhibit a blow-up phenomenon.
\begin{table}
\caption{The spectral radius of the amplification matrix with different $\tau$}
\begin{tabular}{cccc} 
\hline\noalign{\smallskip}
\( \tau  \) & \( 0.9 \times \tfrac{r_2 h^2}{4}  \) & \( \times \tfrac{r_2 h^2}{4}  \) & \( 1.1 \times \tfrac{r_2 h^2}{4}  \)  \\
\noalign{\smallskip}\hline\noalign{\smallskip}
\( \rho(\mathbf{G})  \) & 0.9996 & 0.9995 &1.1161  \\ 
\noalign{\smallskip}\hline
\end{tabular}
\end{table}
\begin{figure}[h]
\centering
\includegraphics[scale=.2]{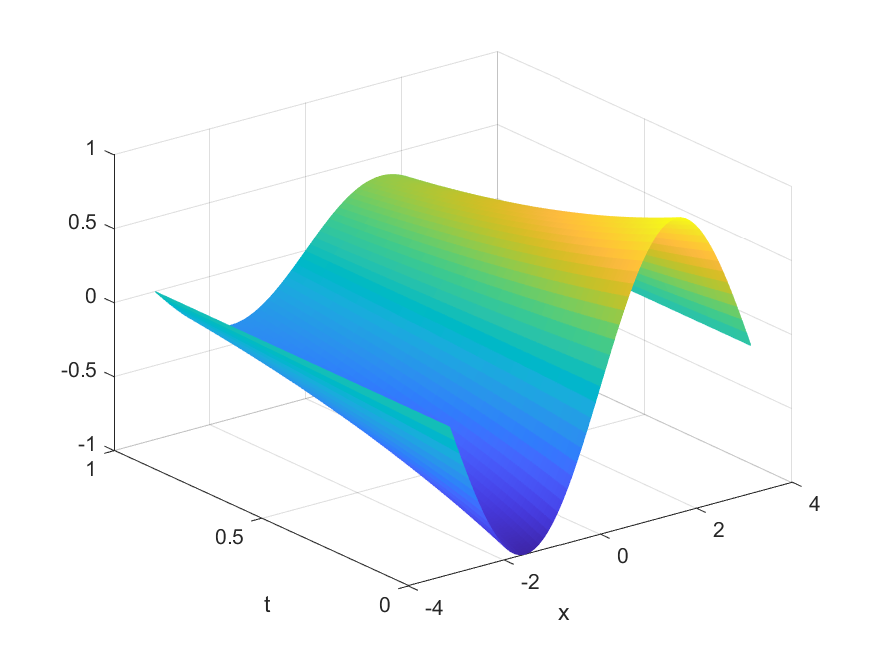}
\includegraphics[scale=.2]{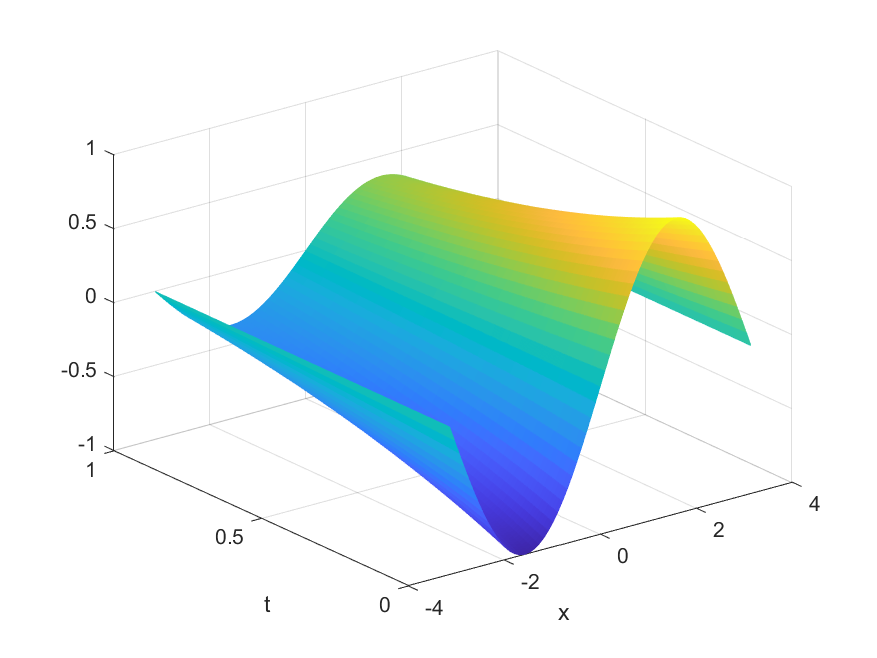}
\includegraphics[scale=.2]{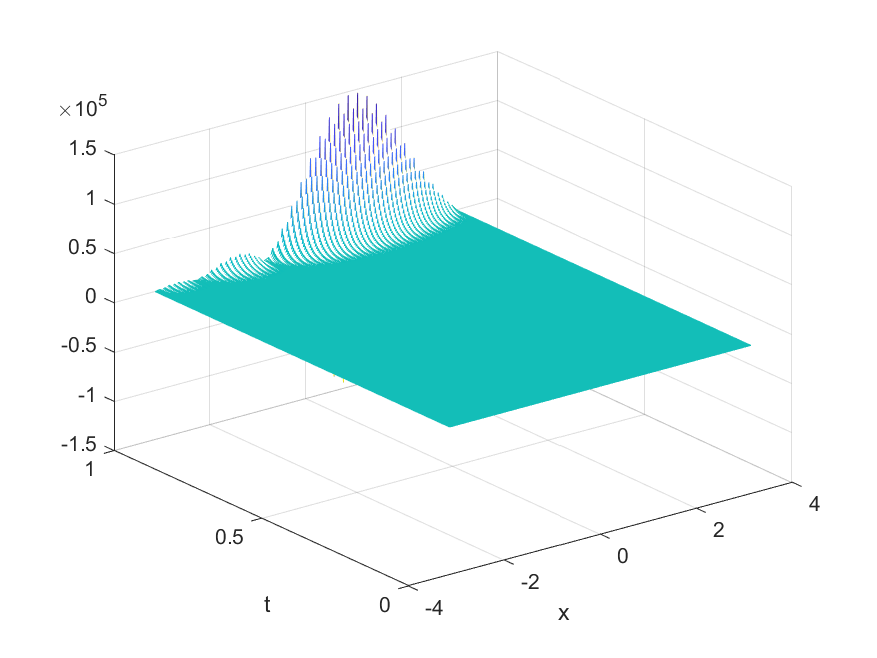}
\caption{\( \tau= 0.9 \times \tfrac{r_1 h^2}{4} \) (Left), \( \tau= \tfrac{r_1 h^2}{4} \) (Middle) and \( \tau =  1.1 \times \tfrac{r_1 h^2}{4} \) (Right). }
\label{fig:blowup}
\end{figure}

\section{Conclusion}
In this paper, we address a conjecture regarding the uniform stability of Adams-Bashforth-type integrator with arbitrary-order accuracy. We provide the discriminant conditions for maintaining scheme stability at a given parabolic radius and the allowable accuracy for this stability. We also identify the reasons why the original scheme fails to achieve the ideal accuracy and propose solutions. As an application of the algorithm to parabolic equations, we derive the CFL condition compatible with ODE problems, along with its \( L^2 \)-stability and error bounds. Finally, some numerical examples are presented at the end of the paper to validate our analytical results.
\appendix
\section{Derivation of Characteristic Polynomial}
\label{sec:Derivation of Characteristic Polynomial}
Derivation of characteristic polynomial of matrix \( \mathbf{A}+ z \mathbf{B} \) includes two steps. First, a trick of the determinant of the block matrix deduces the equivalent matrix in the sense of characteristic polynomial. Second, the virtue of a constructed matrix helps us to obtain a difference equation about the determinant, and its written matrix form can carry out whose specific expression. In addition, the notation \( z \) used in the lemmas and proofs in this section simply represents a complex variable, which is distinct from \( z = \lambda \tau \in \mathbb{C} \).
\begin{lemma}\label{lem:same_characteristic_polynomial}
For all \( \alpha \in \mathbbm{R}^{+} \) and \( z \in \mathbb{C} \), matrices $\mathbf{A}+z\mathbf{S}(\alpha)\mathbf{F}$ and $\mathbf{e}_{1}\mathbf{e}_{1}^{\mathrm{T}} + z \mathbf{F}\mathbf{S}(\alpha)$ have the same characteristic polynomial. Here, \( \mathbf{e}_1 \) stand for a vector whose only the first element equals one and vanish else, and length keep up with the square matrix \( \mathbf{FS}(\alpha) \).
\end{lemma}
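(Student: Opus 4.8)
The plan is to recognize that the two matrices in the statement are nothing but the two orders $\mathbf{U}\mathbf{F}$ and $\mathbf{F}\mathbf{U}$ of a single rectangular product, and then to appeal to the Weinstein--Aronszajn (Sylvester) determinant identity via the block-matrix Schur-complement trick advertised in the preamble of this appendix. Everything rests on two elementary relations between the Fourier block $\mathbf{F}$, the all-one vector $\mathbbm{1}$, and the first coordinate vector $\mathbf{e}_1$.

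First I would record those relations. Reading off the first row of $\mathbf{F}$ gives $\mathbf{e}_1^{\mathrm{T}}\mathbf{F} = \tfrac{1}{s}\mathbbm{1}^{\mathrm{T}}$, so that $\mathbf{A} = \tfrac{1}{s}\mathbbm{1}\mathbbm{1}^{\mathrm{T}} = \mathbbm{1}\,\mathbf{e}_1^{\mathrm{T}}\mathbf{F}$. Dually, since $\sum_{j=1}^{s}\omega_j^{m}$ equals $s$ when $s \mid m$ and $0$ otherwise, the entries $\tfrac{1}{s}\sum_j \omega_j^{1-k}$ of $\mathbf{F}\mathbbm{1}$ vanish for $2 \le k \le q$ and equal $1$ for $k=1$ (this is exactly where $q \le s$ is needed, so that no index $1-k$ is a nonzero multiple of $s$); hence $\mathbf{F}\mathbbm{1} = \mathbf{e}_1$ and therefore $\mathbf{e}_1\mathbf{e}_1^{\mathrm{T}} = \mathbf{F}\,\mathbbm{1}\mathbf{e}_1^{\mathrm{T}}$. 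These two identities let the rank-one perturbations $\mathbf{A}$ and $\mathbf{e}_1\mathbf{e}_1^{\mathrm{T}}$ be absorbed into $\mathbf{F}$ from the left and from the right respectively.

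Setting the $s \times q$ matrix $\mathbf{U} := \mathbbm{1}\mathbf{e}_1^{\mathrm{T}} + z\,\mathbf{S}(\alpha)$, the relations above yield the clean factorizations
\[
\mathbf{A} + z\,\mathbf{S}(\alpha)\mathbf{F} = \mathbf{U}\mathbf{F}, \qquad \mathbf{e}_1\mathbf{e}_1^{\mathrm{T}} + z\,\mathbf{F}\mathbf{S}(\alpha) = \mathbf{F}\mathbf{U},
\]
so the two matrices of the lemma are precisely $\mathbf{U}\mathbf{F}$ and $\mathbf{F}\mathbf{U}$. Expanding the determinant of $\left[\begin{smallmatrix} \lambda\mathbf{I}_s & \mathbf{U} \\ \mathbf{F} & \mathbf{I}_q \end{smallmatrix}\right]$ by the Schur complement of the $(2,2)$-block gives $\det(\lambda\mathbf{I}_s - \mathbf{U}\mathbf{F})$, while expanding by the Schur complement of the $(1,1)$-block gives $\lambda^{s-q}\det(\lambda\mathbf{I}_q - \mathbf{F}\mathbf{U})$; equating the two and extending from $\lambda \ne 0$ by continuity yields $\det(\lambda\mathbf{I}_s - \mathbf{U}\mathbf{F}) = \lambda^{s-q}\det(\lambda\mathbf{I}_q - \mathbf{F}\mathbf{U})$. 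Thus the two characteristic polynomials coincide outright when $s=q$ and differ only by the trivial factor $\lambda^{s-q}$ in general, which is the assertion (and is exactly the reduction used afterwards to read off $p_q$).

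I expect the only real obstacle to be conceptual rather than computational: seeing that both additive terms $\mathbf{A}$ and $\mathbf{e}_1\mathbf{e}_1^{\mathrm{T}}$ are secretly the same rank-one object $\mathbbm{1}\mathbf{e}_1^{\mathrm{T}}$ seen through $\mathbf{F}$ on opposite sides. Once the two root-of-unity identities are in hand the determinant manipulation is routine, so the care should go into verifying those identities and into tracking the $q \le s$ hypothesis that guarantees $\mathbf{F}\mathbbm{1} = \mathbf{e}_1$.
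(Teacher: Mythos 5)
Your proposal is correct, and it reaches the paper's conclusion by a cleaner route built on the same two elementary facts. The paper also reduces the claim to a block-determinant computation: it embeds $\mathbf{A}+z\mathbf{S}(\alpha)\mathbf{F}-\lambda\mathbf{E}$ as a Schur complement of $\left[\begin{smallmatrix}\mathbf{E} & \mathbf{F}\\ -z\mathbf{S}(\alpha) & \mathbf{A}-\lambda\mathbf{E}\end{smallmatrix}\right]$, then absorbs the rank-one block $\mathbf{A}=\tfrac{1}{q}\mathbbm{1}\mathbbm{1}^{\mathrm{T}}$ by bordering the determinant with an extra row and column, and finishes with explicit row/column operations that invoke exactly your two identities ($\mathbf{F}\mathbbm{1}=\mathbf{e}_1$ and the fact that $\tfrac{1}{q}\mathbbm{1}^{\mathrm{T}}$ is the first row of $\mathbf{F}$ up to transposition), tracking signs and powers of $\lambda$ along the way. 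What you do differently is to notice up front that both rank-one perturbations are the single object $\mathbbm{1}\mathbf{e}_1^{\mathrm{T}}$ conjugated by $\mathbf{F}$ from opposite sides, so that the two matrices are literally $\mathbf{U}\mathbf{F}$ and $\mathbf{F}\mathbf{U}$ with $\mathbf{U}=\mathbbm{1}\mathbf{e}_1^{\mathrm{T}}+z\mathbf{S}(\alpha)$, after which a single application of the Weinstein--Aronszajn identity finishes the job. This buys brevity, eliminates the sign bookkeeping, and handles the rectangular case honestly: for $s>q$ the two matrices have different sizes and their characteristic polynomials can only agree up to the factor $\lambda^{s-q}$, a point the paper's statement glosses over (its computation is written for $s=q$) but which is harmless since only the nonzero spectrum enters the stability analysis. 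Your explicit verification that $\mathbf{F}\mathbbm{1}=\mathbf{e}_1$ requires $q\le s$ is also a worthwhile precision that the paper leaves implicit.
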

\begin{proof}
By the standard operation, we consider the characteristic polynomial in start
\begin{equation}
p_q(\lambda;\mathbf{A}+z\mathbf{B}(\alpha)) := 
\begin{vmatrix}
\mathbf{A} + z\mathbf{S}(\alpha)\mathbf{F} -\lambda \mathbf{E}
\end{vmatrix}\nonumber
\end{equation}
and then effort to obtain its expression. According to the unitary property of Fourier matrix $\sqrt{q}\mathbf{F}$, that is $\mathbf{F}\mathbf{F}^{*}=\frac{1}{q} \mathbf{E}=\mathbf{F}^*\mathbf{F}$, we obtain 
 \begin{align*}
\begin{vmatrix}
\mathbf{A} + z\mathbf{S}(\alpha)\mathbf{F} -\lambda \mathbf{E}
\end{vmatrix}
 = & \begin{vmatrix}
q \mathbf{A}\mathbf{F}^{*}  + z\mathbf{S}(\alpha)  - q \lambda \mathbf{F}^{*}
\end{vmatrix}
\cdot
\begin{vmatrix}
\mathbf{F} 
\end{vmatrix} \\
 = & \begin{vmatrix}
q\mathbf{F}\mathbf{A}\mathbf{F}^{*}  + z \mathbf{F}\mathbf{S}(\alpha)  - q\lambda \mathbf{F} \mathbf{F}^{*} 
\end{vmatrix} \\
= & \begin{vmatrix}
\mathbf{e}_{1}\mathbf{e}_{1}^{\mathrm{T}} + z \mathbf{F}\mathbf{S}(\alpha)  -\lambda \mathbf{E} 
\end{vmatrix}
\end{align*}
where \( \mathbf{e}_{1} := [1,0, \ldots, 0]^{\mathrm{T}} \) and \( \mathbf{F}^{*} \) is the conjugate transpose of \( \mathbf{F} \). 
Therefore,
\begin{equation}
p_q(\lambda;\mathbf{A}+z\mathbf{S}(\alpha)\mathbf{F}) = p_q(\lambda;\mathbf{e}_{1}\mathbf{e}_{1}^{\mathrm{T}} + z \mathbf{F}\mathbf{S}(\alpha)) \nonumber
\end{equation}
finished this proof.
\end{proof}
 By this operation, we can get two benefits that $\mathbf{e}_{1}\mathbf{e}_{1}^{\mathrm{T}}$ is extremely sparse matrix with only one non-trivial element at first-column first-row and $\mathbf{F}\mathbf{S}(\alpha)$ is a real quasi-upper triangle matrix with non-zero lower sub-diagonal elements which will be investigated below. 
\begin{lemma}\label{lem:two_inverse_matrix}
For all $0\le j \le q$,
\begin{enumerate}[1)]
\item 
\begin{equation}
\begin{bmatrix}
\beta_{0} & 0 & \ldots & 0 \\ 
\beta_{1} & \beta_{0} & \ldots & 0 \\ 
\vdots & \ddots & \ddots & \vdots \\
\beta_{q} & \ldots & \beta_{1} & \beta_{0}
\end{bmatrix}^{-1}
=
\begin{bmatrix}
\beta_{0}^{(-1)} & 0 & \ldots & 0 \\ 
\beta_{1}^{(-1)} & \beta_{0}^{(-1)} & \ldots & 0 \\ 
\vdots & \ddots & \ddots & \vdots \\
\beta_{q}^{(-1)} & \ldots & \beta_{1}^{(-1)} & \beta_{0}^{(-1)}
\end{bmatrix}
\label{eq:equlity_one}
\end{equation}
where $ \beta_{j} = \gamma_{j}(-\alpha z) $ and $ \beta_{j}^{(-1)} = \gamma_{j}(\alpha z) $.
\item 
\begin{equation}
\begin{bmatrix}
1 && &&\\ 
\lambda\beta_{0}^{(-1)} & 1 &&&\\ 
\vdots & \ddots & \ddots &  &  \\
\lambda\beta_{q-1}^{(-1)} & \ldots & \lambda\beta_{0}^{(-1)} & 1
\end{bmatrix}^{-1}
= 
\begin{bmatrix}
\eta_{0} && &&\\ 
\eta_{1} & \eta_{0} & &&\\ 
\vdots & \ddots & \ddots &  &  \\
\eta_{q} & \ldots & \eta_{1} & \eta_{0}
\end{bmatrix}
\end{equation}
where $ \eta_{j} := \Sigma_{k=0}^{j-1}\gamma_{k}(j-k)(-\lambda)^{j-k}(\alpha z)^{k} $ for all $0 \le j \le q$ with setting \( \eta_{0} :=  \Sigma_{k=0}^{-1} =  1\). In fact, the value $\eta_{0}=1$ can also be derived from the property that the inverse of a unit lower triangular matrix remains a unit lower triangular matrix. 
\end{enumerate}
\end{lemma}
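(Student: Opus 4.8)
The plan is to exploit the fact that every $(q+1)\times(q+1)$ lower-triangular Toeplitz matrix is completely determined by its first column $(c_0,\dots,c_q)$, and that the assignment sending such a matrix to the truncated series $\sum_{j=0}^{q} c_j x^j \pmod{x^{q+1}}$ is a ring isomorphism onto $\mathbb{C}[x]/(x^{q+1})$: products of lower-triangular Toeplitz matrices correspond to products of the associated series modulo $x^{q+1}$, and the identity matrix $\mathbf{E}_{q+1}$ corresponds to the constant $1$. Under this dictionary, both inversion claims become identities of formal power series, which I find far more transparent than multiplying the matrices entrywise. The inverse matrix always exists and is again lower-triangular Toeplitz because each matrix has an invertible diagonal entry, so the only task is to identify the coefficients of the reciprocal series.

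For part 1) I would first note that the generating series attached to $\{\beta_j\}=\{\gamma_j(-\alpha z)\}$ is $\sum_j \frac{(-\alpha z x)^j}{j!}=e^{-\alpha z x}$, while the one attached to $\{\beta_j^{(-1)}\}=\{\gamma_j(\alpha z)\}$ is $e^{\alpha z x}$. Since $e^{-\alpha z x}e^{\alpha z x}=1$ identically, the two matrices are mutual inverses and the claim follows at once. If a purely algebraic verification is preferred, the $m$-th Cauchy convolution $\sum_{k=0}^{m}\gamma_k(-\alpha z)\gamma_{m-k}(\alpha z)=\frac{(\alpha z)^m}{m!}\sum_{k=0}^{m}\binom{m}{k}(-1)^k=\frac{(\alpha z)^m}{m!}(1-1)^m$ vanishes for $m\ge 1$ and equals $1$ for $m=0$, which is exactly the statement that the product equals $\mathbf{E}_{q+1}$.

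For part 2), which I expect to carry the real work, the first column of the matrix to be inverted is $(1,\lambda\beta_0^{(-1)},\dots,\lambda\beta_{q-1}^{(-1)})$, whose generating series is $1+\lambda x\sum_{m\ge 0}\frac{(\alpha z x)^m}{m!}=1+\lambda x\,e^{\alpha z x}$. Because $\lambda x\,e^{\alpha z x}$ has no constant term it is nilpotent modulo $x^{q+1}$, so the reciprocal is the terminating geometric expansion $\sum_{n\ge 0}(-\lambda)^n x^n e^{n\alpha z x}$. Expanding each $e^{n\alpha z x}$ and collecting the coefficient of $x^j$ under the reindexing $k=j-n$ produces $\sum_{k=0}^{j}\gamma_k(j-k)(-\lambda)^{j-k}(\alpha z)^k$; the top term $k=j$ contributes $\gamma_j(0)(\alpha z)^j=0^j/j!\,(\alpha z)^j$, which vanishes for $j\ge 1$ and supplies the value $1$ when $j=0$, so the coefficient coincides with $\eta_j$ as defined (with the understood convention $\eta_0=1$).

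The delicate points are twofold. First, the index bookkeeping in the double sum $\sum_{n,m}(-\lambda)^n\frac{(n\alpha z)^m}{m!}x^{n+m}$ must be done carefully so that setting $n+m=j$ with $k=m$ lands precisely on the claimed summand $\gamma_k(j-k)(-\lambda)^{j-k}(\alpha z)^k$. Second, the boundary case $j=0$ has to be reconciled with the stated upper summation limit $j-1$, which I would handle by observing that the omitted $k=j$ term equals $0^j/j!$ and is therefore harmless for $j\ge 1$ while accounting for $\eta_0=1$. Beyond this, no genuine estimate is required: the entire content is the recognition of the three generating functions $e^{\pm\alpha z x}$ and $1+\lambda x\,e^{\alpha z x}$ together with the geometric inversion of the last one, the Toeplitz--series isomorphism doing all the remaining work.
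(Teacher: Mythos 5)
Your proof is correct, but it takes a genuinely different route from the paper's. The paper verifies both identities entrywise: it forms the Cauchy convolution of the two first columns and reduces each claim to a binomial-theorem identity — for part 1) the collapse $\sum_{\nu}\binom{m}{\nu}(-1)^{\nu}=0^m$, and for part 2) a considerably more intricate double-sum manipulation that, after substituting $H=i-j-2$ and $x=-z/\lambda$, again terminates in $(H-\mu+2)^{\mu}=\sum_{k}\binom{\mu}{k}(H-\mu+1)^{k}$. You instead pass through the ring isomorphism between $(q+1)\times(q+1)$ lower-triangular Toeplitz matrices and $\mathbb{C}[x]/(x^{q+1})$, read off the generating series $e^{-\alpha zx}$, $e^{\alpha zx}$ and $1+\lambda x\,e^{\alpha zx}$, and invert by $e^{-\alpha zx}e^{\alpha zx}=1$ and the terminating geometric expansion $\sum_{n\ge 0}(-\lambda)^n x^n e^{n\alpha zx}$. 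Your index bookkeeping checks out: the coefficient of $x^j$ is $\sum_{k=0}^{j}\gamma_k\bigl((j-k)\alpha z\bigr)(-\lambda)^{j-k}$, and the $k=j$ term equals $0^j/j!\,(\alpha z)^j$, which reconciles the upper limit $j-1$ in the paper's definition of $\eta_j$ with the convention $\eta_0=1$. What your approach buys is twofold: it is shorter and avoids the paper's heaviest computation, and it \emph{derives} the formula for $\eta_j$ rather than merely verifying a candidate that must be guessed in advance; it also dovetails with the generating-function machinery the paper itself deploys in the disproof theorem, where the same geometric series $\sum_j (te^{\zeta t})^j$ appears. What the paper's entrywise verification buys is self-containedness — no appeal to the Toeplitz--series isomorphism — at the cost of the longer binomial manipulation. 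Either argument is acceptable; yours is arguably the more transparent of the two.
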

\begin{proof}
The inverse matrix derived from the identity of a lower triangular matrix is always invertible and remains a lower triangular matrix with an identity. The correctness of the lemma can be verified by confirming that the product of a matrix and its inverse yields the identity matrix. In fact, the proof of both relations ultimately comes down to the binomial theorem.
\begin{enumerate}[1)]
\item 
It is easy to check for the equality \eqref{eq:equlity_one} that
\begin{align*}
\sum_{\nu=0}^{i-j}\beta_{\nu}\beta_{i-j-\nu}^{(-1)} = & \sum_{\nu=0}^{i-j}\gamma_{\nu}(-\alpha z)\gamma_{i-j-\nu}(\alpha z) = \gamma_{i-j}(\alpha)\sum_{\nu=0}^{i-j}\binom{i-j}{\nu}(-1)^{\nu}(1)^{i-j-\nu} \\
= & \gamma_{i-j}(\alpha \cdot 0) =
\begin{cases}
1, \text{ when \( j = i \),}\\
0, \text{ otherwise. }
\end{cases}
\end{align*}
\item 
Let $\beta_{-1}^{(-1)}:=1/\lambda$ for the compatible which need to be considered. 
While $i=j$, $\lambda \beta_{-1}^{(-1)}\eta_{0} = 1$ obviously. The same result can be achieved by stating that the inverse of the identity lower triangular matrix must be the identity lower triangular matrix. Next, we consider $i-j\ge 1$,
\begin{align*}
& \sum_{\nu=0}^{i-j}\lambda\beta_{\nu-1}^{(-1)}\eta_{i-j-\nu} = \eta_{i-j} + \sum_{\nu=1}^{i-j-1}\lambda\beta_{\nu-1}^{(-1)}\eta_{i-j-\nu} + \lambda \beta_{i-j-1}^{(-1)}\eta_{0}   \\
 = & 
\sum_{k=0}^{i-j-1}\gamma_{k}(i-j-k)(-\lambda)^{i-j-k}(\alpha z)^{k} 
+ \\
& \lambda\sum_{\nu=1}^{i-j-1}\gamma_{\nu-1}(\alpha z) \sum_{k=0}^{i-j-\nu-1}\gamma_{k}(i-j-\nu-k)(-\lambda)^{i-j-\nu-k}(\alpha z)^{k} \\
& + \lambda \gamma_{i-j-1}(\alpha z) \\
 = & (-\lambda)^{i-j}\sum_{k=0}^{i-j-2}\gamma_{k}\left((i-j-k) \frac{-\alpha z}{\lambda} \right)  \\
& - (-\lambda)^{i-j}\sum_{\nu=1}^{i-j-1}\gamma_{\nu-1}\left(\frac{-\alpha z}{\lambda}\right) \sum_{k=0}^{i-j-\nu-1}\gamma_{k}\left((i-j-\nu-k)\frac{-\alpha z}{\lambda} \right). 
\end{align*}
While $i=j+1$, the above equality equals zeros for which the summation will be empty.
\par While \( i \ge j +2 \), let \( H:=i-j-2 \ge 0 \) and \( x:= - \tfrac{z}{\lambda} \) as brief notations,
\begin{align*}
 & \sum_{k=0}^H \gamma_{k}\left((H-k+2)x \right)  -  \sum_{\nu=0}^H \gamma_{\nu}(x) \sum_{k=0}^{H-\nu}\gamma_{k}\left((H-\nu-k+1)x \right) \\
    =  & \sum_{k=0}^H \gamma_{k}(H-k+2)x^{k} -  \sum_{\nu=0}^H \sum_{k=0}^{H-\nu}\frac{(H-\nu-k+1)^k}{k!\cdot \nu!} x^{\nu+k} \\
    =  & \sum_{\mu=0}^H \gamma_{\mu}(H-\mu+2)x^{\mu} -  \sum_{\mu=0}^H \left(\sum_{k=0}^{\mu} \frac{(H-\mu+1)^k}{k!\cdot (\mu-k)!}\right) x^{\mu}, \text{ let $\mu = \nu+k$} \\
    = & \sum_{\mu=0}^{H}\left(\gamma_{\mu}(H-\mu+2)  - \sum_{k=0}^{\mu} \frac{(H-\mu+1)^k}{k!\cdot (\mu-k)!}\right)x^{\mu} \\
= & \sum_{\mu=0}^{H}\left(\gamma_{\mu}(H-\mu+2)  - \frac{1}{\mu!}\sum_{k=0}^{\mu}\binom{\mu}{k}(H-\mu+1)^{k}1^{\mu-k}\right)x^{\mu} \\
= & {\sum_{\mu=0}^{H}\left(\gamma_{\mu}(H-\mu+2)  - \frac{(H-\mu+2)^{\mu}}{\mu!}\right)x^{\mu} = \sum_{\mu=0}^{H}0\cdot x^{\mu} = 0}. 
\end{align*}
\end{enumerate}
At this point, the proof of this lemma is complete.
\end{proof}
\begin{lemma}\label{lem:simplify_eig_polynomial}
For all \( \alpha \in \mathbb{R} \) and \( z, \lambda \in \mathbb{C} \), then 
\begin{equation}
\sum_{j=0}^{q}\gamma_{q-j}(\alpha z)\sum_{k=0}^{j-1}\gamma_{k}((j-k)\alpha z)(-\lambda)^{j-k} = \sum_{j=0}^{q}\gamma_{q-j}((j+1)\alpha z)(-\lambda)^{j}. 
\end{equation}
If $q=0$, then $\Sigma_{k=0}^{-1} = 1$ on the left-hand side of the above equality for setting.
\end{lemma}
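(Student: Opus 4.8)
The plan is to read the whole identity as an equality between the coefficients of $t^q$ in two generating functions in a formal variable $t$. Write $w := \alpha z$ and recall $\gamma_m(w) = w^m/m!$, so that $\sum_{m\ge 0}\gamma_m(w)t^m = e^{wt}$. The first thing I would observe is that the inner sum on the left is exactly the quantity $\eta_j$ from Lemma~\ref{lem:two_inverse_matrix}, part~2), namely $\eta_j = \sum_{k=0}^{j-1}\gamma_k((j-k)w)(-\lambda)^{j-k}$ for $j\ge 1$, supplemented by $\eta_0 = 1$; this last value is what the stated convention $\sum_{k=0}^{-1}=1$ encodes, and it is forced by the identity-diagonal structure of the inverse matrix in that lemma. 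Consequently the left-hand side of Lemma~\ref{lem:simplify_eig_polynomial} is precisely the degree-$q$ term of the Cauchy product $\bigl(\sum_m \gamma_m(w)t^m\bigr)\bigl(\sum_j \eta_j t^j\bigr)$.

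Next I would put the generating function $E(t) := \sum_{j\ge 0}\eta_j t^j$ in closed form. One route is to invoke Lemma~\ref{lem:two_inverse_matrix}, part~2) directly: the lower-triangular Toeplitz matrix there has symbol $1+\lambda t e^{wt}$ (its subdiagonal entries are $\lambda\gamma_{m-1}(w)$), so the Toeplitz inverse has symbol $E(t) = (1+\lambda t e^{wt})^{-1}$. A self-contained route is to substitute the formula for $\eta_j$, reindex by $l = j-k\ge 1$, and sum the inner exponential series, which gives $E(t) = 1 + \sum_{l\ge 1}(-\lambda t e^{wt})^l = (1+\lambda t e^{wt})^{-1}$. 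Either way the generating function of the left-hand side becomes $e^{wt}E(t) = e^{wt}/(1+\lambda t e^{wt})$.

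I would then show the right-hand side produces the same generating function. Forming $\sum_{q\ge 0}t^q\sum_{j=0}^q \gamma_{q-j}((j+1)w)(-\lambda)^j$, swapping the order of summation and setting $m = q-j$ collapses the inner sum to $e^{(j+1)wt}$, leaving $\sum_{j\ge 0}(-\lambda)^j t^j e^{(j+1)wt} = e^{wt}\sum_{j\ge 0}(-\lambda t e^{wt})^j = e^{wt}/(1+\lambda t e^{wt})$. Since the two power series in $t$ coincide, equating the coefficient of $t^q$ yields the claimed identity for every $q$, and hence as a polynomial identity in $w=\alpha z$ and $\lambda$ for all admissible values.

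All the steps above are routine reindexings; the two points that need care are the empty-sum convention at $j=0$, which I would justify through $\eta_0=1$ rather than the literal empty sum, and the legitimacy of the geometric-series step. The latter is the real crux but is not an analytic obstacle: because $\lambda t e^{wt}=O(t)$ has vanishing constant term, the series $\sum_l(-\lambda t e^{wt})^l$ is a bona fide formal power series in $t$ (each coefficient of $t^q$ being a finite sum), so every rearrangement is exact at the level of coefficients and no convergence restriction on $z$ or $\lambda$ is required. It is precisely this formal-power-series viewpoint that lets the two independent computations land on the same closed form.
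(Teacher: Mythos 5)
Your proof is correct, but it follows a genuinely different route from the paper's. The paper treats the identity as a finite double sum, interchanges the order of summation (visualized through a matrix product evaluated left-to-right versus right-to-left), collects the coefficient of $(-\lambda)^j$, and closes the argument with the binomial identity $\sum_{k=j}^{q} j^{k-j}\binom{q-j}{q-k} = (j+1)^{q-j}$. You instead package both sides as the coefficient of $t^q$ in a single formal power series, computing that the Cauchy product $e^{wt}\cdot\bigl(\sum_j \eta_j t^j\bigr)$ and the reindexed right-hand side both equal $e^{wt}/(1+\lambda t e^{wt})$; your justification of the geometric-series step (the inner function $\lambda t e^{wt}$ has no constant term, so everything is exact coefficientwise with no convergence hypothesis) is the right one, and your handling of the $j=0$ convention via $\eta_0=1$ matches the role that term plays in Lemma \ref{lem:two_inverse_matrix}. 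What the paper's route buys is a fully elementary, self-contained verification at the level of finite sums, with the empty-sum convention visible term by term. What your route buys is brevity — it avoids the auxiliary binomial identity and proves all $q$ simultaneously — and, more interestingly, it connects this lemma to the machinery the paper itself deploys later: your closed form $e^{\zeta t}/(1-te^{\zeta t}) = 1/(e^{-\zeta t}-t)$ at $\lambda=-1$ is exactly the generating function $\widetilde{\mathsf{F}}(\zeta,t)$ derived in the proof of Theorem \ref{lem:disprove}, so your argument effectively unifies the appendix computation with the harmonic-analysis part of the paper. Either proof is acceptable; yours is arguably the more illuminating of the two.
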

\begin{proof}
To improve readability, we use the matrix language to represent two different summation methods and their relationship visually. Denote \(A_j := \gamma_{q-j}(\alpha z), C_{j-k}^{(j)} := \gamma_{k}((j-k)\alpha z) \) 
\begin{equation}\label{eq:matrix_to_sum}
\mathbbm{1}^{\mathrm{T}}
\begin{bmatrix}
A_1 &&& \\
& A_2 && \\
&& \ddots & \\
&&& A_q
\end{bmatrix}
\cdot
\begin{bmatrix}
C_{1}^{(1)} & &&& \\
C_{1}^{(2)} & C_{2}^{(2)} & &&\\
\vdots & \vdots & \ddots \\
C_{1}^{(q)} & C_{2}^{(q)} & \ldots & C_{q}^{(q)} 
\end{bmatrix}
\begin{bmatrix}
(-\lambda)^{1} \\
(-\lambda)^{2} \\
\vdots \\
(-\lambda)^{q} \\
\end{bmatrix}.
\end{equation}
In the following, $\mathbf{LHS}$ and $\mathbf{RHS}$ abbreviate the left-hand side and right-hand side respectively. 
\begin{align*}
\mathbf{LHS} = & \sum_{j=1}^{q}A_j \sum_{k=0}^{j-1}C_{j-k}^{(j)}(- \lambda)^{j-k} + \gamma_{q}(\alpha z),  \text{ evaluate from right to left in eq. \eqref{eq:matrix_to_sum}}\\ 
= & \sum_{j=1}^{q}\left(\sum_{k=j}^{q}A_k C_{j}^{(k)}\right)(-\lambda)^{j}  + \gamma_{q}(\alpha z), \text{ evaluate from left to right in eq. \eqref{eq:matrix_to_sum}} \\
= & \sum_{j=1}^{q}\left(\sum_{k=j}^{q}\gamma_{q-k}(\alpha z)\cdot \gamma_{k-j}(j \alpha z)\right)(-\lambda)^{j}  + \gamma_{q}(\alpha z) \\
= & \sum_{j=1}^{q}\left(\sum_{k=j}^{q} j^{k-j}\binom{q-j}{q-k}\right)\cdot \gamma_{q-j}(\alpha z)(-\lambda)^{j}  + \gamma_{q}(\alpha z)  \\
= & \sum_{j=0}^{q}\gamma_{q-j}((j+1)\alpha z)(-\lambda)^{j} = \mathbf{RHS},
\end{align*}
where the last line uses a fact that \( \Sigma_{k=j}^{q}j^{k-j}\tbinom{q-j}{q-k} = (j+1)^{q-j} \text{ for all \( 1 \le j \le q \)}  \) which is easy to check by binomial theorem and \( j=0 \) can include the term \( \gamma_{q}(\alpha z) \).
\end{proof}
\begin{lemma}\label{lem:characteristic_polynomial_evaluation}
For all \( z \in \mathbb{C} \) and \( q \ge 1 \), 
\begin{equation}
 p_q(\lambda; \mathbf{e}_1 \mathbf{e}_1 ^{\mathrm{T}} + z \mathbf{FS}(\alpha)) = f_q( \lambda; \alpha z) + f _{q-1}( \lambda; \alpha z) - \gamma_{q}(-z), \nonumber
\end{equation} 
where \( f_q( \lambda; \alpha z) :=  \sum_{j=0}^{q} \gamma_{q-j}((j+1)\alpha z)(-\lambda)^{j} \).
\end{lemma}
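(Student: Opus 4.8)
The plan is to start from Lemma \ref{lem:same_characteristic_polynomial}, which already replaces \( p_q(\lambda;\mathbf{A}+z\mathbf{S}(\alpha)\mathbf{F}) \) by the characteristic polynomial of the much sparser matrix \( \mathbf{e}_1\mathbf{e}_1^{\mathrm{T}}+z\mathbf{F}\mathbf{S}(\alpha) \), so that everything reduces to the \( q\times q \) determinant \( \det(\mathbf{e}_1\mathbf{e}_1^{\mathrm{T}}+z\mathbf{F}\mathbf{S}(\alpha)-\lambda\mathbf{E}_q) \). First I would compute \( \mathbf{F}\mathbf{S}(\alpha) \) entrywise: expanding \( (\alpha+\omega_j)^l \) by the binomial theorem and using the discrete orthogonality of the roots of unity (\( \tfrac1s\sum_j\omega_j^m \) equals \( 1 \) exactly when \( s\mid m \) and \( 0 \) otherwise), the \( (m,l) \) entry collapses to \( \tfrac1l\binom{l}{m-1}\alpha^{l-m+1} \) for \( m\le l+1 \) and vanishes otherwise, together with a single residual wrap-around term in the corner \( (1,q) \) that survives only when \( s=q \). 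Conjugating by the diagonal matrix \( \operatorname{diag}(0!,1!,\dots,(q-1)!) \), which fixes \( \mathbf{e}_1\mathbf{e}_1^{\mathrm{T}} \) and \( \mathbf{E}_q \) and hence leaves the determinant unchanged, turns the generic part into the genuine Toeplitz upper-Hessenberg matrix \( T \) with \( T_{m,l}=\gamma_{l-m+1}(\alpha) \), i.e.\ constant sub-diagonal \( 1 \) and super-diagonals \( \gamma_{d+1}(\alpha) \).

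Next I would peel off the rank-one perturbation by the matrix-determinant lemma, \( \det(\mathbf{e}_1\mathbf{e}_1^{\mathrm{T}}+zT-\lambda\mathbf{E}_q)=\det(zT-\lambda\mathbf{E}_q)+[\operatorname{adj}(zT-\lambda\mathbf{E}_q)]_{1,1} \). Since \( zT-\lambda\mathbf{E}_q \) is Toeplitz, its \( (1,1) \)-minor is precisely the size-\( (q-1) \) copy of itself, so with \( g_m:=\det(zT-\lambda\mathbf{E}_m) \) the expression becomes \( g_q+g_{q-1} \). The wrap-around entry in position \( (1,q) \) enters the determinant only through its cofactor; deleting row \( 1 \) and column \( q \) leaves an upper-triangular block with constant diagonal \( z \) (all lower offsets carry \( \gamma_{\le -1}=0 \)), of determinant \( z^{q-1} \), so this term contributes exactly \( (-1)^{q+1}\tfrac{z^q}{q!}=-\gamma_q(-z) \), present precisely when \( \delta_q=1 \).

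The heart of the argument is the evaluation of \( g_q \), and this is where Lemma \ref{lem:two_inverse_matrix} and Lemma \ref{lem:simplify_eig_polynomial} enter. I would factor \( zT-\lambda\mathbf{E}_q \) against the two lower-triangular Toeplitz matrices of Lemma \ref{lem:two_inverse_matrix}, whose inverses have entries \( \gamma_j(\alpha z) \) and \( \eta_j=\sum_{k}\gamma_k((j-k)\alpha z)(-\lambda)^{j-k} \); reading off the relevant entry of the product expresses \( g_q \) as the double sum \( \sum_{j=0}^{q}\gamma_{q-j}(\alpha z)\,\eta_j \), and Lemma \ref{lem:simplify_eig_polynomial} collapses this to \( f_q(\lambda;\alpha z)=\sum_{j=0}^{q}\gamma_{q-j}((j+1)\alpha z)(-\lambda)^{j} \). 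As an independent check, one may instead expand \( g_q \) by the standard Hessenberg recurrence with constant sub-diagonal \( z \); summing the recurrence gives the generating function \( \sum_{q\ge0}g_q t^q=(e^{-\alpha z t}+\lambda t)^{-1} \), which is exactly the generating function of \( f_q \). Combining the pieces yields \( g_q+g_{q-1}-\gamma_q(-z)\delta_q=f_q(\lambda;\alpha z)+f_{q-1}(\lambda;\alpha z)-\gamma_q(-z)\delta_q \), as claimed.

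The step I expect to be the main obstacle is the bookkeeping around the \( \delta_q \) term: one must verify that the wrap-around produced by discrete orthogonality yields a single extra entry in exactly one corner, that this entry equals \( z/q! \) after the diagonal conjugation, and that its cofactor carries the sign making the contribution \( -\gamma_q(-z) \) rather than \( +\gamma_q(-z) \). The triangular-Toeplitz inversions themselves are routine once the structure of \( \mathbf{F}\mathbf{S}(\alpha) \) is pinned down, since Lemma \ref{lem:two_inverse_matrix} supplies the inverses outright and Lemma \ref{lem:simplify_eig_polynomial} carries out the final summation.
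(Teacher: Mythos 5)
Your proposal is correct and follows essentially the same route as the paper's proof: the same entrywise evaluation of \( \mathbf{FS}(\alpha) \) via roots-of-unity orthogonality (including the single wrap-around corner entry that produces \( -\gamma_q(-z)\delta_q \)), the same splitting of the determinant into a full Hessenberg determinant plus its \( (1,1) \)-minor minus the corner cofactor, the same recursion solved through the two triangular Toeplitz inversions of Lemma \ref{lem:two_inverse_matrix}, and the same final collapse via Lemma \ref{lem:simplify_eig_polynomial}. The only refinement is your conjugation by \( \operatorname{diag}(0!,1!,\dots,(q-1)!) \), which makes the Hessenberg matrix genuinely Toeplitz so that the \( (1,1) \)-minor is immediately the size-\( (q-1) \) copy \( g_{q-1} \), whereas the paper computes \( \tilde{D}_q \) by a separate parallel calculation; this is a tidy simplification but not a different method.
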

\begin{proof}
 By simple evaluation, we can know that  
\begin{align*}
\langle \mathbf{FS}(\alpha)\rangle_{j,k} = & \sum_{n=1}^{q}\mathbf{F}_{j,n}\mathbf{S}_{n,k}(\alpha) =  \frac{1}{q}\sum_{n=1}^{q}\omega_n^{1-j}\cdot\frac{(\alpha+\omega_n)^k}{k}
= \frac{1}{q}\sum_{n=1}^{q}\omega_n^{1-j}\cdot\frac{1}{k}\sum_{m=0}^{k}\binom{k}{m}\alpha^{k-m}\omega_{n}^{m} \\
= & \sum_{m=0}^{k}\frac{1}{k}\binom{k}{m}\alpha^{k-m}\cdot\frac{1}{q} \sum_{n=1}^{q} \omega_n^{1-j+m} \\
= & 
\begin{cases}
\frac{1}{k}\binom{k}{j-1}\alpha^{k-j+1} + \frac{1}{q}\delta_{\{(j,k),(1,q)\}} &\text{ if } j \le k+1,\\
0 , &\text{ if } j > k+1,
\end{cases}
\end{align*}
where should be used a fact that $\tfrac{1}{q}\Sigma_{n=1}^{q} \omega_n^{1-j+m} = 1 $ whenever $1-j+m = 0 ~ (\mod q)$ and varnishing otherwise.
\begin{align*}
\begin{cases}
0 \le m = j -1 \le k \le q & \Rightarrow 1 \le j \le q, k \ge j -1. \\
0 \le m = j -1 + q \le k \le q & \Rightarrow \added{ j = 1}, k = q.
\end{cases}
\end{align*}
 Then,  $\mathbf{FS}(\alpha)$ is a real quasi-upper triangle matrix with a non-zero lower sub-diagonal elements.
Denote 
\begin{equation}
m_{j,k} = 
\begin{cases}
z\mu_{j,k} & \text{ when } 1 \le j < k \le q, \\
z\mu_{j,k} - \lambda & \text{ when } 1 \le j = k \le q. \\
\end{cases}
\text{ with } \mu_{j,k} := \frac{1}{k}\binom{k}{j-1}\alpha^{k-j+1}, \nonumber 
\end{equation}
then \( p_q(\lambda; \mathbf{e}_1 \mathbf{e}_1 ^{\mathrm{T}} + z \mathbf{FS}(\alpha)) \) equals to
\begin{align*}
 & 
\begin{vmatrix}
m_{1,1} + 1 & m_{1,2} & \ldots & m_{1,q-1} & m_{1,q} + \frac{z}{q} \\
z & m_{2,2}  & \ldots & m_{2,q-1} & m_{2,q} \\
  & z/2 & \ddots & m_{3,q-1} & m_{3,q} \\
  &   & \ddots & \vdots & \vdots \\
&  &  & z/(q-1) & m_{q,q}
\end{vmatrix} \\
= & 
\begin{vmatrix}
m_{1,1} & m_{1,2} & m_{1,3} & \ldots & m_{1,q} \\
z & m_{2,2} & m_{2,3} & \ldots & m_{2,q} \\
 & z/2 & m_{3,3} & \ldots & m_{3,q} \\
 &  & \ddots & \ddots & \vdots \\
 &  &  & & m_{q,q}
\end{vmatrix}
+ 
\begin{vmatrix}
m_{2,2} & m_{2,3} & m_{2,4} & \ldots & m_{2,q} \\
z/2 & m_{3,3} & m_{3,4} & \ldots & m_{3,q} \\
 & z/3 & m_{4,4} & \ldots & m_{4,q} \\
 & & \ddots & \ddots & \vdots \\
 & &  &  & m_{q,q}
\end{vmatrix}
- \gamma_{q}(-z) \\
=: & D_{q} + \tilde{D}_{q} - \gamma_{q}(-z).
\end{align*}
The blank part of the matrix is filled with zero elements. Here, we just give the operation of determinant $D_q$ while $\tilde{D}_q$ can be calculated by similar way. 
Expand $D_q$ along with the last line and then get a recursion relation
\begin{align*}
D_q = & m_{q,q}D_{q-1} - \frac{z}{q-1}\left(m_{q-1,q}D_{q-2} - \frac{z}{q-2}\left(\ldots\right)\right) \\
= & m_{q,q}D_{q-1} + \frac{-zm_{q-1,q}}{q-1}D_{q-2} + \frac{z^2m_{q-2,q}}{(q-2)_{2}}D_{q-3} + \ldots  + \frac{(-z)^{q-2} m_{2,q}}{(2)_{q-2}}D_1 + \frac{(-z)^{q-1} m_{1,q}}{(1)_{q-1}} \\
= & \sum_{j=0}^{q-1}\frac{(-z)^{j}}{(q-j)_{j}}m_{q-j,q}D_{q-j-1} = - \sum_{j=0}^{q-1}\frac{(-z)^{j+1}}{(q-j)_{j}} \frac{1}{q}\binom{q}{q-j-1}\alpha^{j+1} D_{q-j-1}  - \lambda D_{q-1} \\
 = & - \sum_{j=0}^{q-1}\gamma_{j+1}(-\alpha z)D_{q-j-1} - \lambda D_{q-1}, 
\end{align*}
where $(a)_n := a(a+1)\ldots(a+n-1), (a)_0 := 1 $ are the rising factorial (or called Pochhammer symbol) and  \( D_{0} = 1 \) for setting. 
Alternatively, we can also obtain the same result directly using the related formula about Hessenberg matrix.
\par 
Thus, we can obtain a difference equation associated with the determinant \( D_{j} \), 
\begin{equation}
\lambda D_{q-1}  + \sum_{j=0}^{q}\gamma_{j}(-\alpha z)D_{q-j} = 0. \nonumber
\end{equation}
Denoting \( \beta_{j}:= \gamma_{j}(-\alpha z) \), the previous difference equations formulate a matrix equation about unknown vector \( \mathbf{d} \) 
\begin{equation}
\bm{\Lambda}\mathbf{d} + \mathbf{B}\mathbf{d}=\mathbf{e}_{1}, \nonumber
\end{equation}
where 
\begin{equation}
\bm{\Lambda} := 
\begin{bmatrix}
0 &  & & \\
\lambda & 0  & & \\
 & \ddots  & \ddots &  \\
& & \lambda & 0
\end{bmatrix}, 
\mathbf{B} := 
\begin{bmatrix}
\beta_{0} & & & & \\ 
\beta_{1} & \beta_{0} & & & \\ 
\vdots & \ddots & \ddots & & \\
\beta_{q} & \ldots & \beta_{1} & \beta_{0}
\end{bmatrix}
\text{ and }
\mathbf{d}:=
\begin{bmatrix}
D_0 \\
D_1 \\
\vdots \\
D_q
\end{bmatrix}. \nonumber
\end{equation}
Thence, 
\begin{equation}
\bm{\Lambda}\mathbf{d} + \mathbf{B}\mathbf{d}=\mathbf{e}_{1}
\Rightarrow 
(\mathbf{B}^{-1}\bm{\Lambda} + \mathbf{E})\mathbf{d} = \mathbf{B}^{-1} \mathbf{e}_{1} 
\Rightarrow 
\mathbf{d} = (\mathbf{B}^{-1}\bm{\Lambda} + \mathbf{E})^{-1} \mathbf{B}^{-1} \mathbf{e}_{1}.
 \nonumber
\end{equation}
Firstly, thank to the property of Toeplitz matrix with  \( \beta_{j} = \gamma_{j}(-\alpha z) \), \( \mathbf{B}^{-1} \) also possess Toeplitz structure and each entries have expression 
\begin{equation}
\mathbf{B}^{-1} = 
\begin{bmatrix}
\beta_{0}^{(-1)} & &&\\ 
\beta_{1}^{(-1)} & \beta_{0}^{(-1)} &&\\ 
\vdots & \ddots & \ddots & & \\
\beta_{q}^{(-1)} & \ldots & \beta_{1}^{(-1)} & \beta_{0}^{(-1)}
\end{bmatrix}
\text{ where \( \beta_{j}^{(-1)} = \gamma_{j}(\alpha z) \) } \text{( by Lemma \ref{lem:two_inverse_matrix})}.  \nonumber
\end{equation}
%
Secondly, \( \mathbf{H}=(\mathbf{B}^{-1}\bm{\Lambda} + \mathbf{E})^{-1} \) is a Teoplitz matrix similarly, 
\begin{equation}
\mathbf{H} := 
\begin{bmatrix}
\eta_{0} && &&\\ 
\eta_{1} & \eta_{0} & &&\\ 
\vdots & \ddots & \ddots &  &  \\
\eta_{q} & \ldots & \eta_{1} & \eta_{0}
\end{bmatrix}
\text{ where } \eta_{n} = \sum_{j=0}^{n-1}\gamma_{j}(n-j)(-\lambda)^{n-j}(\alpha z)^{j} \text{ ( by Lemma \ref{lem:two_inverse_matrix})}\nonumber
\end{equation}
where \( 0 \le n \le q \) and let \( \Sigma_{j=0}^{-1} := 1 \).
Since  \( \beta_{j}^{(-1)} \) and \( \eta_{j} \) for \( 0 \le j \le q \) are specific, immediately we can obtain the explicit expression of  \( D_{q} = \Sigma_{j=0}^{q}\beta_{q-j}^{(-1)}\cdot\eta_{j} \), that is, 
\begin{align}
D_q = \sum_{j=0}^{q}\gamma_{q-j}(\alpha z) \sum_{k=0}^{j-1}\gamma_{k}(j-k)(-\lambda)^{j-k}(\alpha z)^{k} = \sum_{j=0}^{q}
\gamma_{q-j}((j+1)\alpha z)(-\lambda)^{j}, \nonumber
\end{align}
where the Lemma \ref{lem:simplify_eig_polynomial} should be used. By similar operation, 
\begin{equation}
\tilde{D}_{q}= \sum_{j=0}^{q-1}\gamma_{q-1-j}((j+1)\alpha z)(-\lambda)^{j}. \nonumber
\end{equation}
Therefore, we obtain the explicit expression of the \(  p_q(\lambda; \mathbf{e}_1 \mathbf{e}_1 ^{\mathrm{T}} + z \mathbf{FS}(\alpha)) \).
\end{proof}
Based on Lemmas \ref{lem:same_characteristic_polynomial} and \ref{lem:characteristic_polynomial_evaluation}, it is not difficult to conclude that
\begin{equation}
p_q(\lambda;\mathbf{A}+z\mathbf{B}) = p_q(\lambda;\mathbf{A}+z\mathbf{B}(\alpha)/\alpha) = f_q( \lambda; z) + f _{q-1}( \lambda; z) - \gamma_{q}(-z/\alpha), \nonumber
\end{equation}
where \( f_q( \lambda; z) :=  \sum_{j=0}^{q} \gamma_{q-j}((j+1)z)(-\lambda)^{j} \).
\par 
To intuitively demonstrate the accuracy of the above derivation, we can plot the stability region using the original matrix \( \mathbf{A} + z \mathbf{B}(\alpha)/\alpha \), the equivalent matrix \( \mathbf{e}_1 \mathbf{e}_1 ^{\mathrm{T}} + z \mathbf{FS}(\alpha)/\alpha \), and the characteristic polynomial in Figure \ref{fig:uniform_stability_area}.
\begin{figure}[t]
\centering
\includegraphics[scale=.4]{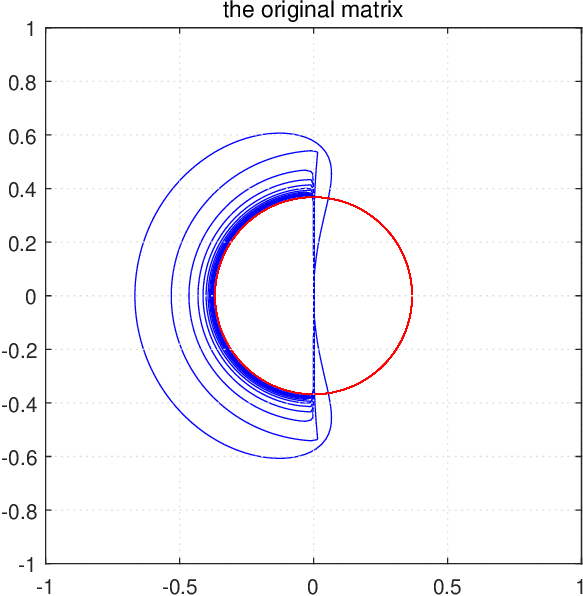}
\includegraphics[scale=.4]{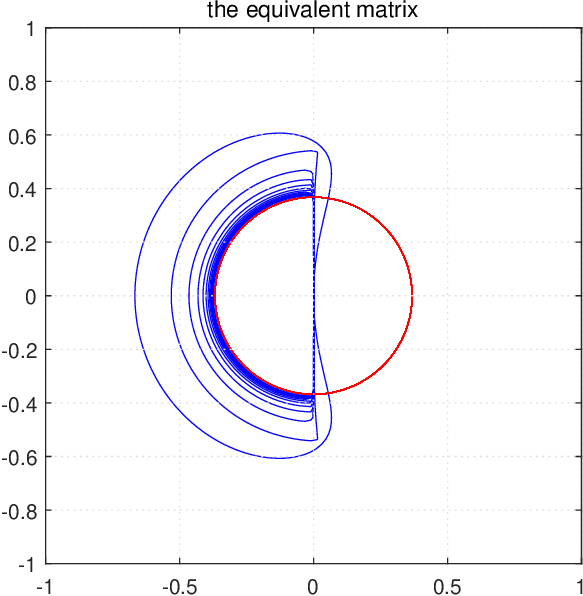}
\includegraphics[scale=.4]{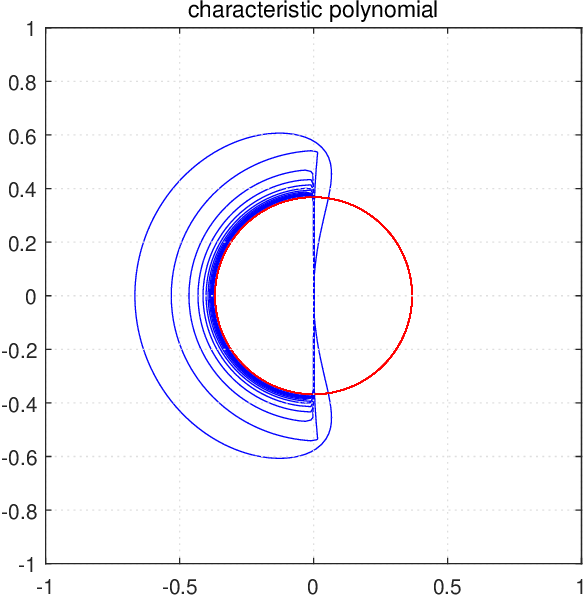}
\caption{The stability domain described by the spectral radius of the original matrix $\mathbf{A} + z \mathbf{B}(\alpha)/\alpha = \mathbf{A} + z \mathbf{S}(\alpha)\mathbf{F}/\alpha$ (Left), the spectral radius of the equivalent matrix $\mathbf{e}_{1}\mathbf{e}_{1}^{\mathrm{T}} + z \mathbf{F}\mathbf{S}(\alpha)/\alpha$ (Middle) and the maximum module of the zeros of the derived characteristic polynomial (Right). The circle centered at the origin with a radius of \( 1/e \) marked with red line and the contour of stability region for $q = 2,\ldots, 16$ with blue one. }
\label{fig:uniform_stability_area}
\end{figure}


%


\begin{acknowledgements}
This work is partially supported by the National Natural Science Foundation of China No.12171385. 
\end{acknowledgements}
\section*{Conflict of interest}
The authors declared that they have no conflicts of interest to this work.

\bibliographystyle{spmpsci}      
\bibliography{ref_AB}   


\end{document}